\documentclass[a4paper,11pt]{article}
\usepackage[french, english]{babel}
\usepackage[latin1]{inputenc}
\usepackage{amsfonts}
\usepackage{amsmath}
\usepackage{amsthm}
\usepackage{amssymb}
\usepackage{mathrsfs}
\usepackage{ae, aecompl}
\usepackage{graphicx}
\usepackage[a4paper, portrait]{geometry}
\usepackage{sectsty}
\usepackage{lmodern}
\RequirePackage{mathrsfs}
\let\mathcal\mathscr
\usepackage{url}

\newcommand*{\Z}{\ensuremath{\mathbf{Z}}}                        
\newcommand*{\qp}{\ensuremath{\mathbf{Q}_p}}                     
\newcommand*{\zp}{\ensuremath{\mathbf{Z}_p}}

\newcommand*{\zpet}{\ensuremath{\mathbf{Z}_p^*}}
\newcommand*{\qpet}{\ensuremath{\mathbf{Q}_p^*}}
\DeclareMathOperator{\p1}{\bold{P}^1}

\setcounter{tocdepth}{1}

\newtheoremstyle{theorem}{11pt}{11pt}{\slshape}{}{\bfseries}{.}{.5em}{}
\newtheoremstyle{note}{11pt}{11pt}{}{}{\bfseries}{.}{.5em}{}

\theoremstyle{plain}
  \newtheorem{theorem}{Th\'{e}or\`{e}me}[section]
  \newtheorem{proposition}[theorem]{Proposition}
  \newtheorem{lemma}[theorem]{Lemme}
  \newtheorem{corollary}[theorem]{Corollaire}

\theoremstyle{definition}

\theoremstyle{remark}
  
  \newtheorem{remark}[theorem]{Remarque}

\begin{document}

\title{Equations diff\'{e}rentielles $p$-adiques et modules de Jacquet analytiques}
\author{Gabriel Dospinescu \thanks
  {\footnotesize {C.M.L.S, Ecole Polytechnique,
  gabriel.dospinescu@math.polytechnique.fr}}}

\maketitle

\begin{abstract}
\footnotesize
 Using differential techniques, we compute
the Jacquet module of the locally analytic vectors of irreducible
admissible unitary representations of ${\rm GL}_2(\qp)$.
 \end{abstract}

\selectlanguage{french}

\section{Introduction}

  Le but de cet article est d'\'{e}tudier le module de Jacquet des ${\rm GL}_2(\qp)$ repr\'{e}sentations de Banach unitaires admissibles, absolument irr\'{e}ductibles. On retrouve les r\'{e}sultats de \cite{C7}, mais les m\'{e}thodes sont
sensiblement diff\'{e}rentes.

\subsection{Notations}\label{not}

   On fixe une extension finie $L$ de $\qp$ et on note $\mathcal{R}$ l'anneau de Robba \`{a} coefficients dans $L$, i.e.
l'anneau des s\'{e}ries de Laurent $\sum_{n\in\mathbf{Z}} a_nT^n$, avec $a_n\in L$, qui convergent sur une couronne de type $0<v_p(T)\leq r$,
o\`{u} $r>0$ d\'{e}pend de la s\'{e}rie.

Soit $\chi: {\rm Gal}(\overline{\qp}/\qp)\to \zpet$ le caract\`{e}re cyclotomique. Il induit un isomorphisme
de $\Gamma={\rm Gal}(\qp(\mu_{p^{\infty}})/\qp)$ sur $\zpet$ et on note $a\to \sigma_a$ son inverse. On munit $\mathcal{R}$ d'une action de
$\Gamma$ et d'un Frobenius $\varphi$, en posant $(\sigma_a f)(T)=f((1+T)^a-1)$ et $(\varphi f)(T)=f((1+T)^p-1)$. Soit $\nabla=\lim_{a\to 1}\frac{\sigma_a-1}{a-1}$
l'action infinit\'{e}simale de $\Gamma$. Explicitement, on a $\nabla(f)=t(1+T)f'(T)$, o\`{u} $t=\log(1+T)$.

  Si $\delta:\qpet\to L^*$
est un caract\`{e}re continu, on note $\mathcal{R}(\delta)$ le $\mathcal{R}$-module libre
 de rang $1$ ayant une base $e$ (dite canonique ) telle que $\varphi(e)=\delta(p)e$ et $\sigma_a(e)=\delta(a)e$
pour tout $a\in\zpet$. On pose $w(\delta)=\delta'(1)$, la d\'{e}riv\'{e}e de $\delta$ en $1$ (rappelons que $\delta$ est automatiquement localement analytique).
Si $\delta$ est unitaire (i.e. si $\delta(\qpet)\subset O_{L}^*$), alors $w(\delta)$ est le poids de Hodge-Tate g\'{e}n\'{e}ralis\'{e} du caract\`{e}re de
${\rm Gal}(\overline{\qp}/\qp)$ attach\'{e} \`{a} $\delta$ par la th\'{e}orie locale du corps de classes, normalis\'{e}e de telle sorte que
$\chi$ corresponde \`{a} $x\to x\cdot |x|_p$.

\subsection{La correspondance de Langlands locale $p$-adique}\label{LLpadique}

  Une $L$-repr\'{e}sentation de ${\rm Gal}(\overline{\qp}/\qp)$ est un $L$-espace vectoriel de dimension finie, muni d'une action
$L$-lin\'{e}aire continue de ${\rm Gal}(\overline{\qp}/\qp)$. Les travaux de Fontaine, Cherbonnier-Colmez, Kedlaya et Berger (voir
\cite{Be1, CC, Fo1, Ke1}) associent
\`{a} une $L$-repr\'{e}sentation $V$ un $\mathcal{R}$-module $D_{\rm rig}=D_{\rm rig}(V)$ libre de rang $\dim_L V$, muni d'actions semi-lin\'{e}aires
de $\varphi$ et $\Gamma$, qui commutent. $D_{\rm rig}$ est aussi muni d'un inverse \`{a}  gauche $\psi$ de $\varphi$, qui
joue un grand r\^{o}le dans la th\'{e}orie. 

   Dans la suite de cette introduction on suppose que $V$ est de dimension $2$ sur $L$, absolument irr\'{e}ductible. La correspondance
de Langlands locale $p$-adique \cite{C5} associe \`{a} $V$ un $L$-espace de Banach $\Pi=\Pi(V)$, muni d'une action continue de ${\rm GL}_2(\qp)$,
qui en fait une repr\'{e}sentation unitaire, admissible et topologiquement absolument irr\'{e}ductible. Soit $\delta_D=\chi^{-1}\cdot \det V$,
que l'on voit comme caract\`{e}re de $\qpet$ et comme caract\`{e}re de ${\rm GL}_2(\qp)$, en composant avec le d\'{e}terminant. On peut utiliser
les actions de $\varphi,\psi$ et $\Gamma$ pour construire un faisceau ${\rm GL}_2(\qp)$-\'{e}quivariant sur $\p1(\qp)$, dont $D_{\rm rig}$
est l'espace des sections sur $\zp$. Par construction, $D_{\rm rig}^{\psi=0}$ est l'espace des sections sur $\zpet$ et l'application de restriction
\`{a} $\zpet$ est donn\'{e}e par ${\rm Res}_{\zpet}=1-\varphi\circ \psi$. Soit $w_D$ l'involution de $D_{\rm rig}^{\psi=0}$ 
d\'{e}crivant l'action de $\left(\begin{smallmatrix} 0 & 1 \\1 & 0\end{smallmatrix}\right)$. L'espace des sections globales 
du faisceau est donc 
$$D_{\rm rig}\boxtimes\p1=\{(z_1,z_2)\in D_{\rm rig}\times D_{\rm rig}|
\quad w_D({\rm Res}_{\zpet}(z_1))={\rm Res}_{\zpet}(z_2)\}.$$ On montre \cite[th. V.2.20]{C5}
que les vecteurs
localement analytiques $\Pi^{\rm an}$ de $\Pi$ vivent dans une suite exacte de ${\rm GL}_2(\qp)$-modules topologiques
 $$0\to (\Pi^{\rm an})^*\otimes\delta_D\to D_{\rm rig}\boxtimes \p1\to \Pi^{\rm an}\to 0.$$

\subsection{Repr\'{e}sentations triangulines}

  Dans \cite{C3}, Colmez d\'{e}finit un espace $\mathcal{S}_{\rm irr}$ de repr\'{e}sentations irr\'{e}ductibles de dimension $2$ de ${\rm Gal}(\overline{\qp}/\qp)$, appell\'{e}es triangulines. Un point de $\mathcal{S}_{\rm irr}$ est un triplet
   $s=(\delta_1,\delta_2,\mathcal{L})$, o\`{u} $\delta_1, \delta_2:\qpet\to L^*$
sont des caract\`{e}res continus et $\mathcal{L}\in \p1(L)$ si $\delta_1=x^k\chi\delta_2$ ($k\in\mathbf{N}$), ou 
$\mathcal{L}\in \mathbf{P}^0(L)=\{\infty\}$ si $\delta_1\notin \{x^k\chi\delta_2, \ k\in\mathbf{N}\}$. Si $s\in \mathcal{S}_{\rm irr}$, on note
$w(s)=w(\delta_1)-w(\delta_2)$, $V(s)$ la repr\'{e}sentation associ\'{e}e et $D_{\rm rig}(s)$ son $(\varphi,\Gamma)$-module sur
l'anneau de Robba. On note aussi $\Pi(s)=\Pi(V(s))$. Par construction, on a une suite exacte
 $0\to \mathcal{R}(\delta_1)\to D_{\rm rig}\to \mathcal{R}(\delta_2)\to 0$, dont la classe d'isomorphisme est d\'{e}termin\'{e}e par
 $\mathcal{L}$. L'espace $\mathcal{S}_{\rm irr}$ admet une partition
  $\mathcal{S}_{\rm irr}=\mathcal{S}_{*}^{\rm ng}\amalg \mathcal{S}_*^{\rm cris}\amalg \mathcal{S}_*^{\rm st}$, o\`{u}

  $\bullet$ $\mathcal{S}_*^{\rm cris}=\{s\in\mathcal{S}_{\rm irr}|
 w(s)\in\mathbb{N}^*, w(s)>v_p(\delta_1(p)) \quad \text{et}\quad
 \mathcal{L}= \infty\}$.

  $\bullet$ $\mathcal{S}_*^{\rm st}=\{s\in\mathcal{S}_{\rm irr}|
 w(s)\in\mathbb{N}^*, w(s)>v_p(\delta_1(p)) \quad \text{et}\quad
 \mathcal{L}\ne \infty\}$.

  $\bullet$ $\mathcal{S}_*^{\rm ng}=\{s\in\mathcal{S}_{\rm irr}| w(s)\notin\mathbb{N}^*\}$.

    Supposons que $\delta_1$ est localement alg\'{e}brique et que $V$ est une repr\'{e}sentation irr\'{e}ductible de dimension $2$.
    On d\'{e}montre alors \cite{C3} que:

    $\bullet$ $V$ correspond \`{a} un point de
   $\mathcal{S}_*^{\rm cris}$ si et seulement si $V$ devient
cristalline sur une extension ab\'{e}lienne de $\qp$;

  $\bullet$ $V$ correspond \`{a} un point de $\mathcal{S}_*^{\rm st}$ si et seulement si $V$ est une tordue par un caract\`{e}re d'ordre
 fini d'une repr\'{e}sentation semi-stable non cristalline.

\subsection{Le module de Jacquet analytique}

   Soit $U=\left(\begin{smallmatrix} 1 & \qp\\0 & 1\end{smallmatrix}\right)$. Si $\pi$ est une $L$-repr\'{e}sentation localement analytique de ${\rm GL}_2(\qp)$ (voir \cite{Em5, ST2, ST3} pour les bases de la th\'{e}orie), on note $J(\pi)$ son module
de Jacquet na\"{i}f, quotient de $\pi$ par l'adh\'{e}rence du sous-espace engendr\'{e} par les vecteurs $(u-1)\cdot v$, o\`{u}
$u\in U$ et $v\in \pi$. Le dual\footnote{Tous les duaux que l'on consid\`{e}re dans cet article sont topologiques.} de $J(\pi)$ est $J^*(\pi)=(\pi^*)^{U}$
et c'est naturellement une repr\'{e}sentation localement analytique du tore diagonal de ${\rm GL}_2(\qp)$.
 Le premier r\'{e}sultat est l'analogue $p$-adique d'un r\'{e}sultat classique
 de la th\'{e}orie des repr\'{e}sentations lisses, et confirme
le principe selon lequel les repr\'{e}sentations triangulines
correspondent aux ${\rm GL}_2(\qp)$-repr\'{e}sentations de la s\'{e}rie principale unitaire \cite{C6}.

 \begin{theorem}\label{th1}
   Soient $V$ et $\Pi$ comme dans \ref{LLpadique}. Alors
   $J^*(\Pi^{\rm an})$ est un
  $L$-espace vectoriel de dimension au plus $2$ et il est non nul si et seulement si
  $V$ est trianguline.
 \end{theorem}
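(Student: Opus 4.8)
\textbf{Step 1 (reduction to $D_{\rm rig}\boxtimes\mathbf{P}^1$).} The plan is first to turn the statement into a computation inside $D_{\rm rig}\boxtimes\mathbf{P}^1$. The character $\delta_D$ factors through $\det$, hence is trivial on $U$. Applying $(-)^U$ to the exact sequence $0\to(\Pi^{\rm an})^*\otimes\delta_D\to D_{\rm rig}\boxtimes\mathbf{P}^1\to\Pi^{\rm an}\to 0$, and using that $\Pi$ is topologically irreducible and infinite-dimensional so that $\Pi^U=0$ (e.g. by the description of the Kirillov model of $\Pi$) and therefore $(\Pi^{\rm an})^U=0$, left-exactness of invariants gives a canonical isomorphism $J^*(\Pi^{\rm an})\otimes\delta_D\xrightarrow{\sim}(D_{\rm rig}\boxtimes\mathbf{P}^1)^U$. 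It thus suffices to show that $(D_{\rm rig}\boxtimes\mathbf{P}^1)^U$ has dimension $\le 2$ over $L$, and is nonzero exactly when $V$ is trianguline.

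\textbf{Step 2 (localisation at $\infty$).} Next I would show $U$-invariant vectors are concentrated at the fixed point $\infty\in\mathbf{P}^1(\qp)$. If $z=(z_1,z_2)\in D_{\rm rig}\boxtimes\mathbf{P}^1$ is fixed by $\left(\begin{smallmatrix}1&b\\0&1\end{smallmatrix}\right)$ for all $b\in\zp$, then since this matrix stabilizes $\zp\subset\mathbf{P}^1$ and acts on $D_{\rm rig}={\rm Res}_{\zp}(D_{\rm rig}\boxtimes\mathbf{P}^1)$ by multiplication by $(1+T)^b$, we get $(1+T)^b z_1=z_1$; differentiating at $b=0$ gives $t z_1=0$, and as $t=\log(1+T)$ is a nonzerodivisor in $\mathcal{R}$ and $D_{\rm rig}$ is $\mathcal{R}$-free, $z_1={\rm Res}_{\zp}(z)=0$. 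The compatibility condition defining $D_{\rm rig}\boxtimes\mathbf{P}^1$ then forces ${\rm Res}_{\zpet}(z_2)=0$, so $z$ is supported on a neighbourhood of $\infty$; conjugating by the diagonal torus to absorb the noncompact part of $U$, such $z$ embeds into Fontaine's module $D_{\rm dif}(V)$ (the completion of $D_{\rm rig}(V)$ at $\infty$, a differential module of rank $2$ over the differential field $L_\infty((t))$, where $L_\infty=\bigcup_n L(\mu_{p^n})$), on which $\Gamma$ acts through its infinitesimal action $\nabla$.

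\textbf{Step 3 (the differential equation).} Decomposing $(D_{\rm rig}\boxtimes\mathbf{P}^1)^U$ into generalized eigenspaces under the diagonal torus (whose action commutes with $U$) reduces us to: given a continuous character $\delta$ of the torus, is there a nonzero vector fixed by $U$ and eigen for the torus with character $\delta$? Such a vector is killed by $u^+=\left(\begin{smallmatrix}0&1\\0&0\end{smallmatrix}\right)$; combining this with the torus-eigenvector condition and the bracket relation of $u^+$ with the Lie algebra of the torus, and reading everything off in $D_{\rm dif}(V)$ via Step 2, turns the problem into a first-order $p$-adic differential equation $\nabla y=\mu\,y$ with $\mu$ an explicit shift of $w(\delta)$, where one moreover demands that $y$ descend to $D_{\rm rig}(V)[1/t]$ with controlled overconvergence. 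Over $D_{\rm dif}(V)$ alone such an equation always has nonzero solutions (the generalized Hodge--Tate--Sen weights), so the content lies entirely in this descent. Here I invoke the theory of $p$-adic differential equations: $D_{\rm rig}(V)$ carries a Frobenius with $\nabla\varphi=p\,\varphi\nabla$, so a nonzero overconvergent solution $y$ generates, after multiplying by a suitable power of $t$ and straightening the resulting rank-one $(\varphi,\nabla)$-object to an étale one via Kedlaya's slope filtration, a $(\varphi,\Gamma)$-stable sub-$\mathcal{R}$-module of rank $1$ of $D_{\rm rig}(V)$, i.e. a triangulation; hence $V$ is trianguline, with $\mu\in\{w(\delta_1),w(\delta_2)\}$. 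Conversely, if $V$ is trianguline with $0\to\mathcal{R}(\delta_1)\to D_{\rm rig}(V)\to\mathcal{R}(\delta_2)\to 0$, the canonical generator of $\mathcal{R}(\delta_1)$ (resp. a well-chosen preimage of the one of $\mathcal{R}(\delta_2)$) is a $\nabla$-eigenvector and, one checks, assembles into a genuine $U$-fixed vector of $D_{\rm rig}\boxtimes\mathbf{P}^1$, so $J^*(\Pi^{\rm an})\ne 0$. Finally $\dim\le 2$ follows because the space of solutions of the equations $\nabla y=\lambda y$ ($\lambda$ varying) inside the rank-$2$ differential module $D_{\rm dif}(V)$, into which $J^*(\Pi^{\rm an})\otimes\delta_D$ embeds by Steps 1--2, is at most $2$-dimensional over $L$.

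\textbf{Main obstacle.} The delicate point is the "only if" half of Step 3: promoting an abstract $\nabla$-eigenvector of $D_{\rm dif}(V)$ to an actual $(\varphi,\Gamma)$-stable rank-one lattice inside $D_{\rm rig}(V)$, which means controlling simultaneously the Frobenius (invisible on $D_{\rm dif}$) and the overconvergence of the solution. This, together with a separate analysis of the cases $w(\delta)\in\mathbf{N}$ (the $\mathcal{S}_*^{\rm cris}$ and $\mathcal{S}_*^{\rm st}$ situations), where powers of $t$ and logarithmic terms appear in the solutions and where the two a priori solutions must be kept apart so that the dimension stays $2$ rather than collapsing, is where the real work lies.
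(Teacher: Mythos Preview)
Your sketch has the right overall architecture---pass to $U$-invariants in $D_{\rm rig}\boxtimes\mathbf{P}^1$, solve a $\nabla$-equation, link solutions to triangulations---but it diverges from the paper at the technical core, and the dimension bound in Step~3 has a genuine gap. The paper never leaves $D_{\rm rig}$. Its main input (Proposition~\ref{inf}, imported from \cite{D1}) is the explicit formula for the \emph{full} action of $u^+$: for $z=(z_1,z_2)\in D_{\rm rig}\boxtimes\mathbf{P}^1$,
\[
u^+(z)=\Bigl(tz_1,\ -\tfrac{1}{t}(\nabla-a)(\nabla-b)\,z_2\Bigr),
\]
so $(D_{\rm rig}\boxtimes\mathbf{P}^1)^{u^+=0}=\{(0,z):z\in X\}$ with $X=D_{\rm rig}^{(\nabla-a)(\nabla-b)=0}$, a second-order equation directly in $D_{\rm rig}$; no torus eigendecomposition is needed (and indeed your decomposition in Step~3 presupposes finite-dimensionality, which is what you are trying to prove). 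Finiteness $\dim_L X<\infty$ then follows from the elementary but crucial Lemma~\ref{primo}, namely $(\mathrm{Frac}\,\mathcal{R})^{\nabla=0}=L$, and the sharp bound $\dim_L X\le 2$ from the case analysis of Proposition~\ref{prop2}. Your detour through $D_{\rm dif}(V)$ loses exactly this: the constants of $L_\infty((t))$ under $\nabla$ are $L_\infty$, not $L$, so a rank-$2$ differential module only gives $\dim_{L_\infty}\le 2$ for each $\nabla$-eigenspace, which is infinite over $L$. To bound anything over $L$ you must stay over $\mathcal{R}$ (or at least retain $\varphi$, which $D_{\rm dif}$ forgets).

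Two further points. Your Step~1 assumes $(\Pi^{\rm an})^U=0$; this is not available a priori (the Kirillov-model argument you allude to is itself a substantial input). The paper sidesteps this by proving the reverse inclusion directly: since $X$ is finite-dimensional and $\varphi$-stable, every $z\in X$ lies in $\tilde{D}_{\rm rig}^+$, which by \cite[lemme~V.2.17]{C5} already sits inside $(\Pi^{\rm an})^*\otimes\delta_D$. And once $\dim_L X<\infty$ is in hand, the implication ``$X\ne 0\Rightarrow V$ trianguline'' is immediate: $X$ is stable under $\varphi$ and $\Gamma$, hence (after a finite extension of $L$) contains a common eigenvector, and any $(\varphi,\Gamma)$-eigenvector in $D_{\rm rig}$ generates a saturated rank-one sub-$(\varphi,\Gamma)$-module. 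The obstacle you flag in your last paragraph---promoting a $\nabla$-eigenvector in $D_{\rm dif}$ to an overconvergent $(\varphi,\Gamma)$-stable line via slope theory---simply does not arise once one has the $u^+$-formula and works in $D_{\rm rig}$ throughout.
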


  Ce th\'{e}or\`{e}me est aussi d\'{e}montr\'{e} dans \cite[th. 0.1]{C7}, en utilisant l'action de $\varphi$ sur $D_{\rm rig}$. Notre approche est orthogonale
  (elle utilise l'action infinit\'{e}simale de $\Gamma$ au lieu de celle de $\varphi$) et plus directe: si $u^+$
d\'{e}signe l'action infinit\'{e}simale de $U$, le r\'{e}sultat principal de
 \cite{D1} montre que le noyau de $u^+$ sur
  l'espace $(\Pi^{\rm an})^{*}$ s'identifie \`{a}
l'espace des solutions de l'\'{e}quation diff\'{e}rentielle $(\nabla-a)(\nabla-b)z=0$, o\`{u}
$z\in D_{\rm rig}$, $a$ et $b$ sont les poids de Hodge-Tate g\'{e}n\'{e}ralis\'{e}s de $V$
et $\nabla$ est l'action infinit\'{e}simale de $\Gamma$ sur $D_{\rm rig}$. Cela ram\`{e}ne l'\'{e}tude de
$J^*(\Pi^{\rm an})$ \`{a} la r\'{e}solution de cette \'{e}quation diff\'{e}rentielle, ce qui se fait sans mal.

  Soit $\delta_1\otimes\delta_2$ le caract\`{e}re $(a,d)\to \delta_1(a)\delta_2(d)$ du tore
  diagonal $T$ de $\rm{GL}_2(\qp)$. Le r\'{e}sultat suivant pr\'{e}cise le th\'{e}or\`{e}me \ref{th1}
  et correspond \`{a} \cite[th.~0.6]{C7}.

\begin{theorem}\label{th4}

   Soit $s=(\delta_1,\delta_2,\mathcal{L})\in\mathcal{S}_{\rm irr}$.

1) Si $s\in \mathcal{S}_*^{\rm st}$ ou si $w(s)\notin {\Z}^*$, alors
 $J^*(\Pi^{\rm an}(s))=\delta_1^{-1}\otimes \delta_2^{-1}\chi$.

2) Si $w(s)\in \{...,-2,-1\}$, alors
$J^*(\Pi^{\rm an}(s))=(\delta_1^{-1}\otimes \delta_2^{-1}\chi)\oplus
 (x^{w(s)}\delta_1^{-1}\otimes x^{-w(s)}\delta_2^{-1}\chi)$.

3) Si $s\in\mathcal{S}_*^{\rm cris}$, alors $J^*(\Pi^{\rm an}(s))=(\delta_1^{-1}\otimes \delta_2^{-1}\chi)\oplus
 (x^{-w(s)}\delta_2^{-1}\otimes x^{w(s)}\delta_1^{-1}\chi)$ si $s$ est non exceptionnel (i.e. si
 $\delta_1\ne x^{w(s)}\delta_2$) et $J^*(\Pi^{\rm an}(s))=(\delta_1^{-1}\otimes \delta_2^{-1}\chi)\otimes
 \left(\begin{smallmatrix} 1 & v_p(a/d) \\0 & 1\end{smallmatrix}\right)$ dans le cas contraire.

\end{theorem}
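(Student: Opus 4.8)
The strategy is to combine Theorem \ref{th1} with an explicit analysis of the differential equation $(\nabla - a)(\nabla - b)z = 0$ on $D_{\rm rig}(s)$, where $\{a,b\} = \{w(\delta_1), w(\delta_2)\}$ are the generalized Hodge--Tate weights. By the result of \cite{D1} recalled in the introduction, $\ker u^+$ on $(\Pi^{\rm an})^*$ is identified with the space $X$ of solutions $z \in D_{\rm rig}$ of this equation; then $J^*(\Pi^{\rm an}(s)) = X \otimes \delta_D$ as a $T$-module, where $T$ acts on $X$ through the residual action of $\left(\begin{smallmatrix} a & 0 \\ 0 & d \end{smallmatrix}\right)$, i.e. essentially through $\varphi$ (for the $p$-part) and $\Gamma$ (for the $\zpet$-part). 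So the whole problem reduces to: (i) compute the $L$-dimension of $X$ and a basis adapted to the triangulation $0 \to \mathcal{R}(\delta_1) \to D_{\rm rig} \to \mathcal{R}(\delta_2) \to 0$; (ii) read off the action of $\varphi$ and $\sigma_a$ on that basis, hence the characters by which $T$ acts.

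First I would solve the equation on the rank-one pieces. On $\mathcal{R}(\delta_i)$, writing a section as $f e_i$ with $f \in \mathcal{R}$, the operator $\nabla$ acts by $\nabla(fe_i) = (\nabla f + w(\delta_i) f)e_i$, so $(\nabla - w(\delta_i))(fe_i) = (\nabla f)e_i$ and the solutions of $(\nabla - w(\delta_i))(fe_i)=0$ are $f$ with $\nabla f = t(1+T)f' = 0$, i.e. $f \in L$ — a one-dimensional space spanned by $e_i$ itself. The interesting case is when $w(\delta_1) - w(\delta_2) = w(s)$ is a nonzero integer, because then the two exponents of the equation differ by an integer and one must look for a "logarithmic" solution: one seeks $z \in D_{\rm rig}$ lifting a nonzero multiple of $t^{w(s)} e_2 \in \mathcal{R}(\delta_2)$ (when $w(s) < 0$ one instead uses that $\mathcal{R}(\delta_2)$ contains $t^{w(s)}\mathcal{R}$-sections) with $(\nabla - w(\delta_1))(\nabla - w(\delta_2))z = 0$. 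The obstruction to such a lift living in $D_{\rm rig}$ is exactly a cohomological condition on the extension class $\mathcal{L}$; this is where the trichotomy $\mathcal{S}_*^{\rm ng}$, $\mathcal{S}_*^{\rm cris}$, $\mathcal{S}_*^{\rm st}$ enters.

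Concretely: in case (1), either $w(s) \notin \Z^*$ (the exponents are non-congruent, the equation is "non-resonant", and $X$ is spanned by $e_1$ alone since $e_2$ does not lift to a solution for degree reasons), or $s \in \mathcal{S}_*^{\rm st}$ (here $\mathcal{L} \neq \infty$ makes the extension non-split in a way that obstructs the second solution); either way $\dim_L X = 1$, spanned by $e_1$, and $\varphi e_1 = \delta_1(p)e_1$, $\sigma_a e_1 = \delta_1(a)e_1$, giving after twisting by $\delta_D = \chi^{-1}\det V = \chi^{-1}\delta_1\delta_2$ the character $\delta_1^{-1}\otimes\delta_2^{-1}\chi$ — I would just match the normalizations carefully. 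In case (2), $w(s)$ is a negative integer: now $\mathcal{R}(\delta_2) \supset t^{w(s)}\mathcal{R}\cdot e_2$ contains the element $t^{w(s)}e_2$, which is killed by $\nabla - w(\delta_2)$ since $\nabla(t^{w(s)}e_2) = (w(s) + w(\delta_2))t^{w(s)}e_2 = w(\delta_1)t^{w(s)}e_2$; wait — one checks it is actually killed by $\nabla - w(\delta_1)$, hence lies in $X$, and it lifts canonically (no obstruction, because $H^1$ vanishes in the relevant range), so $\dim_L X = 2$ with basis $e_1$ and this second solution $z_2$; on $z_2$ the torus acts via $x^{w(s)}\delta_1\otimes x^{-w(s)}\delta_2$ up to the $\delta_D$-twist, as the $t^{w(s)}$ factor contributes the character $x^{w(s)}$ on the $a$-line and $x^{-w(s)}$ on the $d$-line (recall $t$ has weight... one traces the $\Gamma$- and $\varphi$-eigenvalue of $t$). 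In case (3), $s \in \mathcal{S}_*^{\rm cris}$ means $\mathcal{L} = \infty$, so the extension splits as $(\varphi,\Gamma)$-modules after inverting $t$: $D_{\rm rig}[1/t] \cong \mathcal{R}(\delta_1)[1/t] \oplus \mathcal{R}(\delta_2)[1/t]$; then the second solution comes from $\mathcal{R}(\delta_2)$ and, crucially, its torus-character is built from $\delta_2$ rather than $\delta_1$, which is why the answer is $x^{-w(s)}\delta_2^{-1}\otimes x^{w(s)}\delta_1^{-1}\chi$ — with the roles of $\delta_1,\delta_2$ swapped compared to case (2). In the exceptional subcase $\delta_1 = x^{w(s)}\delta_2$ the two would-be characters coincide, the splitting of $D_{\rm rig}[1/t]$ fails to descend, and one gets instead a non-semisimple two-dimensional $T$-module, i.e. $(\delta_1^{-1}\otimes\delta_2^{-1}\chi) \otimes \left(\begin{smallmatrix} 1 & v_p(a/d) \\ 0 & 1\end{smallmatrix}\right)$; this I would get by computing that $\varphi$ acts on the rank-two solution space by a single Jordan block, the off-diagonal entry being forced by the valuation of the ratio of Frobenius eigenvalues.

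The main obstacle will be step (iii)–(iv): carefully tracking how $\varphi$ acts on the logarithmic solution $z_2$ in cases (2) and (3), and in particular getting the Frobenius eigenvalue right (this is what distinguishes "the character built from $\delta_1$" from "the character built from $\delta_2$", and it is where the hypothesis $w(s) > v_p(\delta_1(p))$ in the definitions of $\mathcal{S}_*^{\rm cris/st}$ is used to guarantee convergence/uniqueness of the lift). Equivalently one must show the lift of the second solution from $D_{\rm rig}[1/t]$ back to $D_{\rm rig}$ exists and is essentially unique; the existence is a direct computation with the extension class, but pinning down the eigenvalue of $\varphi$ on it — and in the exceptional case the exact value of the unipotent shear — requires a genuine computation inside $\mathcal{R}$, using that $\varphi(t) = pt$ and the explicit shape of $\delta_D$.
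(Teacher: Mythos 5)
Your global strategy coincides with the paper's: use \cite{D1} to identify the relevant space with $X=\{z\in D_{\rm rig}\mid (\nabla-w(\delta_1))(\nabla-w(\delta_2))z=0\}$, compute $X$ and the $(\varphi,\Gamma)$-eigencharacters of a basis, then twist (this is propositions \ref{annulation} and \ref{prop2}); note only that the equality $J^*(\Pi^{\rm an})\otimes\delta_D=\{(0,z),\ z\in X\}$ requires, beyond \cite[th. 1]{D1}, the two steps of proposition \ref{annulation}: that $u^+$-invariants equal $U$-invariants (lemma \ref{ut}), and that every $z\in X$ actually lies in $(\Pi^{\rm an})^*\otimes\delta_D$ (via $\varphi$-stability of $X$, $\tilde{D}_{\rm rig}^+$ and \cite[lemme V.2.17]{C5}). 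The genuine problem is your treatment of case (2). For $w(s)\in\{-1,-2,\ldots\}$ the element $t^{w(s)}e_2$ you propose to lift does not exist: $t^{w(s)}\notin\mathcal{R}$ when $w(s)<0$, so it is not a section of $\mathcal{R}(\delta_2)$ and there is no lifting problem, let alone an ``$H^1$-obstruction'', to discuss. What happens is the opposite: the image of $X$ in $\mathcal{R}(\delta_2)$ is $0$, because $\nabla f=w(s)f$ has no nonzero solution in $\mathcal{R}$ for $w(s)<0$ (lemma \ref{eqdiff}), and the second solution is $t^{-w(s)}e_1$, inside the sub-object $\mathcal{R}(\delta_1)$; indeed $X\cap\mathcal{R}e_1=\{fe_1\mid \nabla(\nabla f)+w(s)\nabla f=0\}$ is $2$-dimensional. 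This is not cosmetic: a solution mapping to $t^{w(s)}e_2$ would carry the character $x^{w(s)}\delta_2$ and would contribute $x^{-w(s)}\delta_2^{-1}\otimes x^{w(s)}\delta_1^{-1}\chi$ (the case-(3) answer), not the asserted $x^{w(s)}\delta_1^{-1}\otimes x^{-w(s)}\delta_2^{-1}\chi$; your claim that the torus acts on $z_2$ through a character built from $\delta_1$ is inconsistent with your own construction of $z_2$.

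The other place where the substance is missing is $s\in\mathcal{S}_*^{\rm st}$, which is precisely where the work lies: there $w(s)\in\mathbf{N}^*$, so $t^{w(s)}e_2$ does belong to $\mathcal{R}(\delta_2)$ and is annihilated by the relevant operator, and one must prove that it does not lift to an element of $X$. Saying that ``$\mathcal{L}\neq\infty$ obstructs the second solution'' is the statement, not a proof. The paper first normalizes a hypothetical lift into a simultaneous eigenvector of $\varphi$ and $\Gamma$ (using $v_p(\delta_1(p))>0>v_p(\delta_2(p))$ and the commutation of $\varphi$ with $\Gamma$), then gets a contradiction from the classification of such eigenvectors (lemma \ref{propre}, i.e. \cite[lemme 3.22]{C7}); some input of this kind, or an honest computation with the extension class as in \cite{C3}, must appear in your argument. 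Likewise, in case (3) the existence and normalization of $e_2'$ — $\varphi(e_2')=p^{w(s)}\delta_2(p)e_2'$, resp. $\varphi(e_2')=p^{w(s)}\delta_2(p)e_2'+e_1$ in the exceptional case — is quoted from \cite[prop. 3.10]{C3}, and the shear $v_p(a/d)$ comes from this ``$+e_1$'' together with the fact that only the $p^{\Z}$-part of the torus acts through the Jordan block; the ``valuation of the ratio of the Frobenius eigenvalues'' is $0$ in the exceptional case, so that cannot be the mechanism. Finally, $w(s)=0$ falls under case (1) and is not covered by your ``non-resonant'' remark; it needs the short separate argument of lemma \ref{prec} and proposition \ref{prop2}.
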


\subsection{L'involution $w_D$ et d\'{e}vissage de $D_{\rm rig}\boxtimes \p1$}\label{invdevis}

  On suppose que $s=(\delta_1,\delta_2,\mathcal{L})\in\mathcal{S}_{\rm irr}$ et on note
$e_i$ la base canonique de $\mathcal{R}(\delta_i)$, $p_s$ la projection canonique $D_{\rm rig}(s)\to \mathcal{R}(\delta_2)$
et $\hat{e}_2\in D_{\rm rig}(s)$ tel que $p_s(\hat{e}_2)=e_2$. Si $U$ est un ouvert compact de $\zp$, soit
${\rm LA}(U)$ l'espace des fonctions localement analytiques sur $U$ \`{a} valeurs dans $L$ et soit $\mathcal{D}(U)$ son
dual topologique. Si $\delta:\qpet\to L^*$ est un carac\`{e}re continu,
on d\'{e}finit une involution $w_{\delta}$ sur $\mathcal{D}(\zpet)$
 en demandant que $$\int_{\zpet} \phi (w_{\delta} \mu)=\int_{\zpet} \delta(x)\phi\left(\frac{1}{x}\right)$$
pour tout $\phi\in {\rm LA}(\zpet)$. Via l'isomorphisme $(\mathcal{R}^{+})^{\psi=0}\simeq \mathcal{D}(\zpet)$
donn\'{e} par le th\'{e}or\`{e}me d'Amice, cela induit une involution $w_{\delta}$ sur
$(\mathcal{R}^+)^{\psi=0}$, qui satisfait $w_{\delta}(\sigma_a f)=\delta(a)\sigma_{\frac{1}{a}}(w_{\delta}(f))$.
Cette involution s'\'{e}tend de mani\`{e}re unique en une involution de $\mathcal{R}^{\psi=0}$, satisfaisant la m\^{e}me relation
que ci-dessus. Le r\'{e}sultat suivant fournit une description
 plus ou moins explicite de l'involution $w_D$ dans le cas triangulin.
 Si $s\in \mathcal{S}_{*}^{\rm cris}$, cela permet de retrouver et renforcer le d\'{e}licat lemme II.3.13 de \cite{C5}.

\begin{theorem}\label{th2}

   Pour tout $f\in \mathcal{R}^{\psi=0}$ on a $w_{D(s)}(f\cdot e_1)=\delta_1(-1)w_{\delta_D\cdot\delta_1^{-2}}(f)\cdot e_1$
et $$p_s(w_{D(s)}(f\cdot \varphi(\hat{e}_2)))=\delta_2(-1)w_{\delta_D\cdot\delta_2^{-2}}(f)\cdot\varphi(e_2).$$
\end{theorem}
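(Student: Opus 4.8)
Le plan est de se ramener au calcul de l'involution $w$ pour les $(\varphi,\Gamma)$-modules de rang $1$, via la fonctorialit\'e de la construction $D\mapsto D\boxtimes\mathbf{P}^1$ et de l'action de ${\rm GL}_2(\qp)$. On prouvera d'abord l'\'enonc\'e de rang $1$ suivant : si $\delta:\qpet\to L^*$ est un caract\`ere continu et si l'on munit $\mathcal{R}(\delta)$ d'un faisceau ${\rm GL}_2(\qp)$-\'equivariant sur $\mathbf{P}^1(\qp)$ de caract\`ere central $\eta$, normalis\'e de sorte que $\left(\begin{smallmatrix}p&0\\0&1\end{smallmatrix}\right)$, $\left(\begin{smallmatrix}a&0\\0&1\end{smallmatrix}\right)$ et $\left(\begin{smallmatrix}1&b\\0&1\end{smallmatrix}\right)$ agissent sur les sections $\mathcal{R}(\delta)$ au-dessus de $\zp$ par $\varphi$, $\sigma_a$ et la multiplication par $(1+T)^b$ respectivement, alors l'involution $w$ de $\mathcal{R}(\delta)^{\psi=0}=\mathcal{R}^{\psi=0}\cdot e_\delta$ est donn\'ee par $w(f\cdot e_\delta)=\delta(-1)\,w_{\eta\delta^{-2}}(f)\cdot e_\delta$ pour tout $f\in\mathcal{R}^{\psi=0}$.

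Pour \'etablir cet \'enonc\'e, on note $W$ l'op\'erateur de $\mathcal{R}^{\psi=0}$ d\'efini par $w(f\cdot e_\delta)=\delta(-1)\,W(f)\cdot e_\delta$. La relation $w\circ\sigma_a=\eta(a)\,\sigma_{1/a}\circ w$ sur $\mathcal{R}(\delta)^{\psi=0}$, cons\'equence de $\left(\begin{smallmatrix}0&1\\1&0\end{smallmatrix}\right)\left(\begin{smallmatrix}a&0\\0&1\end{smallmatrix}\right)=\left(\begin{smallmatrix}1&0\\0&a\end{smallmatrix}\right)\left(\begin{smallmatrix}0&1\\1&0\end{smallmatrix}\right)$ et du calcul du caract\`ere central, entra\^ine que $W$ est une involution v\'erifiant $W(\sigma_a f)=(\eta\delta^{-2})(a)\,\sigma_{1/a}(W(f))$. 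Vu l'unicit\'e de l'extension \`a $\mathcal{R}^{\psi=0}$ rappel\'ee au \S~\ref{invdevis}, il suffira donc de v\'erifier que $W$ co\"{i}ncide avec $w_{\eta\delta^{-2}}$ sur $(\mathcal{R}^+(\delta))^{\psi=0}$. Via l'isomorphisme d'Amice $(\mathcal{R}^+)^{\psi=0}\simeq\mathcal{D}(\zpet)$, cela revient \`a expliciter l'action de $\left(\begin{smallmatrix}0&1\\1&0\end{smallmatrix}\right)$ sur les sections du faisceau de rang $1$ au-dessus de $\zpet$ : en d\'eroulant la construction de \cite{C5} (resp. de \cite{C3} lorsque $\delta$ n'est pas unitaire), et en utilisant la d\'ecomposition $\left(\begin{smallmatrix}0&1\\1&0\end{smallmatrix}\right)=\left(\begin{smallmatrix}-1&0\\0&1\end{smallmatrix}\right)\left(\begin{smallmatrix}0&-1\\1&0\end{smallmatrix}\right)$ d'o\`u proviendra le facteur $\delta(-1)$, on obtiendra au niveau des distributions l'application $\phi\mapsto\big(x\mapsto(\eta\delta^{-2})(x)\,\phi(1/x)\big)$, soit exactement $w_{\eta\delta^{-2}}$.

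On en d\'eduira le th\'eor\`eme comme suit. On dispose de la suite exacte $0\to\mathcal{R}(\delta_1)\to D_{\rm rig}(s)\to\mathcal{R}(\delta_2)\to 0$ de $(\varphi,\Gamma)$-modules, les trois termes \'etant munis de faisceaux ${\rm GL}_2(\qp)$-\'equivariants de m\^eme caract\`ere central $\delta_D=\chi^{-1}\delta_1\delta_2$. La fonctorialit\'e fournit un plongement ${\rm GL}_2(\qp)$-\'equivariant $\mathcal{R}(\delta_1)\boxtimes\mathbf{P}^1\hookrightarrow D_{\rm rig}(s)\boxtimes\mathbf{P}^1$, donc $w_{D(s)}$ stabilise $\mathcal{R}(\delta_1)^{\psi=0}$ et y co\"{i}ncide avec l'involution du faisceau de rang $1$ attach\'e \`a $\mathcal{R}(\delta_1)$ et \`a $\eta=\delta_D$ ; l'\'enonc\'e de rang $1$ donne alors $w_{D(s)}(f\cdot e_1)=\delta_1(-1)\,w_{\delta_D\delta_1^{-2}}(f)\cdot e_1$. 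Pour la seconde formule, la fonctorialit\'e appliqu\'ee \`a $p_s$ donne $p_s\circ w_{D(s)}=w\circ p_s$ sur $D_{\rm rig}(s)^{\psi=0}$, o\`u $w$ est l'involution du faisceau de rang $1$ attach\'e \`a $\mathcal{R}(\delta_2)$ et \`a $\delta_D$ ; comme $\psi(f\varphi(\hat e_2))=\psi(f)\hat e_2=0$ on a $f\varphi(\hat e_2)\in D_{\rm rig}(s)^{\psi=0}$, et comme $p_s$ est $\mathcal{R}$-lin\'eaire et commute \`a $\varphi$ on a $p_s(f\varphi(\hat e_2))=f\varphi(e_2)=\delta_2(p)\,f\cdot e_2$ ; l'\'enonc\'e de rang $1$ appliqu\'e \`a $\mathcal{R}(\delta_2)$ fournit alors $p_s(w_{D(s)}(f\varphi(\hat e_2)))=\delta_2(p)\delta_2(-1)\,w_{\delta_D\delta_2^{-2}}(f)\cdot e_2=\delta_2(-1)\,w_{\delta_D\delta_2^{-2}}(f)\cdot\varphi(e_2)$.

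La difficult\'e principale sera le calcul de rang $1$, et surtout le suivi exact des normalisations de \cite{C5} permettant d'identifier le caract\`ere $\eta\delta^{-2}$ et le signe $\delta(-1)$ (le passage de l'\'el\'ement de Weyl \`a $\left(\begin{smallmatrix}0&1\\1&0\end{smallmatrix}\right)$ introduisant aussi un facteur $\sigma_{-1}$ dont il faudra v\'erifier la disparition dans la formule finale). Il faudra \'egalement prendre garde, pour $s\in\mathcal{S}_{\rm irr}$, au fait que le sous-module $\mathcal{R}(\delta_1)$ n'est en g\'en\'eral pas \'etale, ce qui impose d'utiliser la version de la construction valable pour les modules triangulins \cite{C3}, et \`a la compatibilit\'e de l'action de ${\rm GL}_2(\qp)$ avec le passage au sous-objet et au quotient dans une suite exacte de $(\varphi,\Gamma)$-modules munis de faisceaux de m\^eme caract\`ere central.
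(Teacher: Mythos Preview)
Your plan contains a genuine circularity. The ``fonctorialit\'e'' you invoke to obtain a ${\rm GL}_2(\qp)$-\'equivariant embedding $\mathcal{R}(\delta_1)\boxtimes\p1\hookrightarrow D_{\rm rig}(s)\boxtimes\p1$ and a ${\rm GL}_2(\qp)$-\'equivariant projection onto $\mathcal{R}(\delta_2)\boxtimes\p1$ is precisely the content of the theorem (and of its Corollary~\ref{devi}). Indeed, the action of ${\rm GL}_2(\qp)$ on $D_{\rm rig}\boxtimes\p1$ is built from the involution $w_{D(s)}$, which is defined via the limit formula of \cite[th.~II.3.1]{C5} using the \emph{\'etale} structure of $D$; nothing in that construction tells you a priori that $w_{D(s)}$ preserves the non-\'etale submodule $\mathcal{R}(\delta_1)^{\psi=0}$, let alone that its restriction is given by the explicit $w_{\delta_D\delta_1^{-2}}$. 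In the paper the logical order is the reverse of what you propose: one first proves Theorem~\ref{th2} by an independent method, and only then deduces the d\'evissage (Corollary~\ref{dev}).

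Relatedly, your rank-$1$ step cannot be carried out as written. You propose to ``d\'erouler la construction de \cite{C5}'' for the faisceau on $\mathcal{R}(\delta)$, but that construction requires slope $0$, while here $v_p(\delta_1(p))=-v_p(\delta_2(p))>0$. The reference \cite{C3} does not provide a ${\rm GL}_2(\qp)$-equivariant sheaf on rank-$1$ non-\'etale modules either. One may of course \emph{define} an involution on $\mathcal{R}(\delta)^{\psi=0}$ by the formula $f\cdot e_\delta\mapsto \delta(-1)\,w_{\eta\delta^{-2}}(f)\cdot e_\delta$, but then the statement that this agrees with the restriction of $w_{D(s)}$ is exactly the theorem. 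What is missing from your argument is the key input of the paper: the identification $J^*(\Pi^{\rm an})\otimes\delta_D=\{(0,z)\mid z\in X\}$ (Proposition~\ref{annulation}), which shows in particular that $w\cdot e_1$ is $U$-invariant in $D_{\rm rig}\boxtimes\p1$ (Lemma~\ref{coro}). It is this $U$-invariance, combined with a matrix identity in ${\rm GL}_2(\qp)$, that allows one to compute $w_{D(s)}((1+T)e_1)$ directly and then to propagate by $\mathcal{R}(\Gamma)$-semi-linearity. The second formula is handled analogously after passing to the quotient $Y=(D_{\rm rig}\boxtimes\p1)/(\mathcal{R}e_1\boxtimes\p1)$.
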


   La tr\`{e}s mauvaise convergence de la suite\footnote{ Soit $D^{\dagger}$ le sous-module surconvergent de $D_{\rm rig}$
   et $D$ le $(\varphi,\Gamma)$-module sur le corps de Fontaine $\mathcal{E}$ attach\'{e} \`{a} $V$. Il d\'{e}coule de la correspondance
   de Langlands locale $p$-adique pour ${\rm GL}_2(\qp)$ (\cite{C5}, th. II.3.1, prop. V.2.1 et lemme V.2.4) que si $z\in D^{\dagger,\psi=0}$, la suite $$\sum_{i\in \left(\mathbb{Z}/p^n\mathbb{Z}\right)^*}
       \delta_D(i^{-1})(1+T)^i \sigma_{-i^2} \varphi^n \psi^n ((1+T)^{-i^{-1}}z)$$ converge dans $D$ (mais pas dans $D^{\dagger}$) et sa limite
   $w_D(z)$ appartient \`{a} $D^{\dagger,\psi=0}$. L'extension de $w_D$ \`{a} $D_{\rm rig}^{\psi=0}$ se fait en utilisant 
la densit\'{e} de $D^{\dagger}$ dans $D_{\rm rig}$.} d\'{e}finissant $w_D$ rend d\'{e}licate une preuve directe du th\'{e}or\`{e}me
\ref{th2}.
On d\'eduit du th\'{e}or\`{e}me \ref{th2} le corollaire \ref{devi} ci-dessous
qui est le point de d\'{e}part \cite{C7, LXZ} pour l'\'{e}tude
des vecteurs localement analytiques de la s\'{e}rie principale unitaire. 

 Soit $$\mathcal{R}\boxtimes_{\delta}\p1=\{(f_1,f_2)\in\mathcal{R}\times\mathcal{R}|
   \quad
{\rm Res}_{\zpet}(f_2)=w_{\delta}({\rm Res}_{\zpet}(f_1))\}.$$ En copiant les constructions
de Colmez, on munit $\mathcal{R}\boxtimes_{\delta}\p1$ d'une structure naturelle
de ${\rm GL}_2(\qp)$-module topologique (pour les d\'{e}tails voir \ref{delta}).
Ce module $\mathcal{R}\boxtimes_{\delta}\p1$ est \'{e}troitement li\'{e} aux induites paraboliques\footnote{On note dans la suite ${\rm Ind}_B^{G}(\delta_1\otimes\delta_2)$
l'espace des fonctions localement analytiques $f:{\rm GL}_2(\qp)\to L$ telles que 
$f\left( \left(\begin{smallmatrix} a & b \\0 & d\end{smallmatrix}\right)g\right)=\delta_1(a)\delta_2(d)f(g)$ pour tous
$a,d\in \qpet$, $b\in\qp$ et $g\in {\rm GL}_2(\qp)$.}, car on peut montrer qu'il vit dans une suite
exacte de ${\rm GL}_2(\qp)$-modules topologiques
$$0\to ({\rm Ind}_B^{G}(\delta^{-1}\otimes 1))^*
\to \mathcal{R}\boxtimes_{\delta}\p1\to {\rm Ind}_B^{G}(\chi^{-1}\delta\otimes\chi^{-1})\to 0.$$

\begin{corollary}\label{devi}
  Soit $s\in \mathcal{S}_{\rm irr}$. La suite exacte $0\to \mathcal{R}(\delta_1)\to
  D_{\rm rig}\to \mathcal{R}(\delta_2)\to 0$ induit une suite exacte de ${\rm GL}_2(\qp)$-modules
  topologiques
   $$0\to (\mathcal{R}\boxtimes_{\delta_D\cdot\delta_1^{-2}}\p1)\otimes\delta_1\to 
D_{\rm rig}\boxtimes\p1\to (\mathcal{R}\boxtimes_{\delta_D\cdot
    \delta_2^{-2}}\p1)\otimes\delta_2\to 0.$$
\end{corollary}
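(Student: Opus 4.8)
The plan is to obtain the exact sequence of global $\p1$-sections from the exact sequence of $\zp$-sections $0\to\mathcal{R}(\delta_1)\to D\to\mathcal{R}(\delta_2)\to 0$ (I write $D=D_{\rm rig}(s)$ and $p=p_s$) by using Theorem \ref{th2} to control how the nonlinear ingredient $w_D$ of the $\boxtimes\p1$-construction interacts with the triangulation. I will repeatedly invoke two elementary facts about $\mathrm{Res}_{\zpet}=1-\varphi\psi$ on a $(\varphi,\Gamma)$-module $D'$ over $\mathcal{R}$: it restricts to the identity on $(D')^{\psi=0}$, hence is a retraction of the inclusion $(D')^{\psi=0}\hookrightarrow D'$, and its kernel is $\varphi(D')$. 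I will also use that $p\colon D\to\mathcal{R}(\delta_2)$ is surjective, as is its restriction $D^{\psi=0}\to\mathcal{R}(\delta_2)^{\psi=0}$ (send $\hat e_2$ to $e_2$), and that $\mathcal{R}(\delta_i)^{\psi=0}=\mathcal{R}^{\psi=0}e_i=\mathcal{R}^{\psi=0}\varphi(e_i)$.

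First I would unwind Theorem \ref{th2}. Since $w_{\delta_D\delta_1^{-2}}$ preserves $\mathcal{R}^{\psi=0}$, the first formula shows that $w_D$ stabilises the submodule $\mathcal{R}(\delta_1)^{\psi=0}\subset D^{\psi=0}$ and acts there by $f e_1\mapsto\delta_1(-1)w_{\delta_D\delta_1^{-2}}(f)e_1$; being $L$-linear, $w_D$ then descends along $p$ to an involution $\overline{w}_D$ of $\mathcal{R}(\delta_2)^{\psi=0}$, which by the second formula is $f\varphi(e_2)\mapsto\delta_2(-1)w_{\delta_D\delta_2^{-2}}(f)\varphi(e_2)$. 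Transporting the structure through $f\mapsto fe_1$ and $f\mapsto f\varphi(e_2)$ and bookkeeping the twists coming from $\varphi(e_i)=\delta_i(p)e_i$, $\sigma_a(e_i)=\delta_i(a)e_i$, this gives $\mathrm{GL}_2(\qp)$-equivariant identifications of $\mathcal{R}(\delta_1)\boxtimes\p1$ (built using $w_D|_{\mathcal{R}(\delta_1)^{\psi=0}}$) with $(\mathcal{R}\boxtimes_{\delta_D\delta_1^{-2}}\p1)\otimes\delta_1$ and of $(D/\mathcal{R}(\delta_1))\boxtimes\p1$ (built using $\overline{w}_D$) with $(\mathcal{R}\boxtimes_{\delta_D\delta_2^{-2}}\p1)\otimes\delta_2$. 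Here I use that the $\mathrm{GL}_2(\qp)$-action on any $D'\boxtimes\p1$ is assembled out of $\varphi,\psi,\Gamma$ on $D'$ together with the involution of $(D')^{\psi=0}$ describing $\left(\begin{smallmatrix}0&1\\1&0\end{smallmatrix}\right)$, so that a morphism of $(\varphi,\Gamma)$-modules compatible with $\psi$ and with these involutions induces a $\mathrm{GL}_2(\qp)$-equivariant map on $\p1$-sections.

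Next I would exhibit the maps. The inclusion $\mathcal{R}(\delta_1)\hookrightarrow D$ commutes with $\varphi,\psi,\Gamma$ and, by the previous step, with the involutions on the $\psi=0$ parts, so $(z_1,z_2)\mapsto(z_1,z_2)$ defines a $\mathrm{GL}_2(\qp)$-equivariant injection $\mathcal{R}(\delta_1)\boxtimes\p1\hookrightarrow D\boxtimes\p1$. Likewise $p$ commutes with $\varphi,\psi,\Gamma$ and intertwines $w_D$ with $\overline{w}_D$, so $(z_1,z_2)\mapsto(p(z_1),p(z_2))$ is a well-defined $\mathrm{GL}_2(\qp)$-equivariant map $D\boxtimes\p1\to(D/\mathcal{R}(\delta_1))\boxtimes\p1$. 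The composite of these two maps is visibly zero, and a pair $(z_1,z_2)$ lies in the kernel of the second exactly when $z_1,z_2\in\ker p=\mathcal{R}(\delta_1)$, i.e. exactly when it lies in $\mathcal{R}(\delta_1)\boxtimes\p1$; this gives exactness in the middle.

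The one point I expect to require a genuine argument is surjectivity of $D\boxtimes\p1\to(D/\mathcal{R}(\delta_1))\boxtimes\p1$. Given $(\bar z_1,\bar z_2)$ in the target, I would lift $\bar z_1$ to $z_1\in D$, set $y=w_D(\mathrm{Res}_{\zpet}(z_1))\in D^{\psi=0}$, and choose $z_2\in D$ with $\mathrm{Res}_{\zpet}(z_2)=y$. Then $p(z_2)$ and $\bar z_2$ have the same image under $\mathrm{Res}_{\zpet}$, namely $p(y)=\overline{w}_D(\mathrm{Res}_{\zpet}(\bar z_1))=\mathrm{Res}_{\zpet}(\bar z_2)$, so $p(z_2)-\bar z_2\in\varphi(\mathcal{R}(\delta_2))=p(\varphi(D))$; subtracting from $z_2$ a preimage of this difference in $\varphi(D)\subseteq\ker\mathrm{Res}_{\zpet}$, I may assume $p(z_2)=\bar z_2$ while keeping $\mathrm{Res}_{\zpet}(z_2)=y$, so that $(z_1,z_2)\in D\boxtimes\p1$ maps to $(\bar z_1,\bar z_2)$. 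Putting everything together, the sequence $0\to\mathcal{R}(\delta_1)\boxtimes\p1\to D\boxtimes\p1\to(D/\mathcal{R}(\delta_1))\boxtimes\p1\to 0$ is exact, and the identifications above turn it into the claimed sequence. The substantive inputs are thus Theorem \ref{th2} itself — needed precisely because the series defining $w_D$ converges far too poorly for a direct check — together with the functoriality, built into Colmez's construction, of $D'\mapsto D'\boxtimes\p1$ in the datum $(\varphi,\psi,\Gamma,w_{D'})$.
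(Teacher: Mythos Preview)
Your proof is correct and follows essentially the same approach as the paper: both deduce the exact sequence from Theorem~\ref{th2} (Propositions~\ref{inv} and~\ref{inv2} in the body) by showing that $w_D$ preserves $\mathcal{R}(\delta_1)^{\psi=0}$ and descends to $\mathcal{R}(\delta_2)^{\psi=0}$, then defining the obvious inclusion and projection and checking exactness. The paper writes the maps down explicitly with the $\delta_i(-1)$ twists (e.g.\ $(f_1,f_2)\mapsto(f_1e_1,\delta_1(-1)f_2e_1)$) and simply asserts that well-definedness and exactness are immediate from the two propositions, whereas you package things more functorially and actually spell out the surjectivity argument; these are purely presentational differences.
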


    Ce r\'{e}sultat est d\'{e}montr\'{e} dans \cite[th. 4.6]{C7}, ainsi que dans \cite{LXZ} (prop. 6.8)
    par des m\'{e}thodes diff\'{e}rentes. Il permet de donner \cite[th.07]{C7} 
une description compl\`{e}te de la repr\'{e}sentation $\Pi^{\rm an}$ (en particulier,
de montrer qu'elle est de longueur finie et d'en trouver les constituants de Jordan-H\H{o}lder), confirmant ainsi des conjectures
de Berger, Breuil et Emerton \cite{BB, Em1}.

  \subsection{Remerciements}
  Ce travail est une partie de ma th\`{e}se de doctorat, r\'{e}alis\'{e}e
 sous la direction de Pierre Colmez et de Ga\"{e}tan Chenevier.
 Je leur suis profond\'{e}ment reconnaissant
 pour des nombreuses discussions que nous avons eues et pour leurs tout aussi nombreuses suggestions.
 Je voudrais aussi remercier R.Taylor, X.Caruso et L. Berger pour m'avoir invit\'{e} \`{a} exposer ces r\'{e}sultats \`{a} l'I.A.S, dans
 le cadre du Workshop on Galois Representations and Automorphic Forms, et \`{a} l'E.N.S Lyon, dans le cadre de la
 conf\'{e}rence "Th\'{e}orie de Hodge p-adique, \'{e}quations diff\'{e}rentielles p-adiques et leurs applications".
 Merci aussi \`{a} Andrea Pulita pour une discussion qui m'a permis de simplifier une d\'{e}monstration et \`{a} Ramla Abdellatif, qui
m'a grandement aid\'{e} \`{a} am\'{e}liorer la r\'{e}daction.

\section{Le module de Jacquet de $\Pi^{\rm an}$}

\subsection{Un r\'{e}sultat de finitude}\label{finiteness}

  \quad On d\'{e}montre un r\'{e}sultat de finitude pour les \'{e}quations diff\'{e}rentielles $p$-adiques attach\'{e}es
   aux repr\'{e}sentations galoisiennes. Rappelons \cite[V.1]{Be1} que si $V$ est une $L$-repr\'{e}sentation de ${\rm Gal}(\overline{\qp}/\qp)$
et si $D_{\rm rig}$ est son $(\varphi,\Gamma)$-module sur $\mathcal{R}$, alors l'action de $\Gamma$ peut se d\'{e}river, d'o\`{u} une connexion
$\nabla=\lim_{a\to 1}\frac{\sigma_a-1}{a-1}$ sur $D_{\rm rig}$, au-dessus de la connexion $\nabla$ sur $\mathcal{R}$ (introduite dans 
\ref{not}). Si $P\in L[X]$, notons
   $$D_{\rm rig}^{P(\nabla)=0}=\{z\in D_{\rm rig}|
P(\nabla)(z)=0\}.$$

\begin{proposition}\label{fini}
  Soit $V$ une $L$-repr\'{e}sentation quelconque de ${\rm Gal}(\overline{\qp}/\qp)$
  et soit $D_{\rm rig}$ son $(\varphi,\Gamma)$-module sur $\mathcal{R}$.
  Si $P\in L[X]$ est non nul, alors $$\dim_{L} D_{\rm rig}^{P(\nabla)=0}\leq \dim_L (V)\cdot \deg(P).$$
\end{proposition}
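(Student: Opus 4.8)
The plan is to reduce the statement to a question about solutions of a differential equation over the Robba ring, and then to bound the dimension of such solution spaces by a local (analytic) argument near a point of the annulus.

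\textbf{Plan.} The plan is to reduce the statement to a standard fact about horizontal sections of differential modules over the Robba ring $\mathcal{R}$, using the companion-matrix trick to linearize the higher order operator $P(\nabla)$. Recall first that $\nabla$ acts on $D_{\rm rig}$ as a connection over the derivation $\nabla=t(1+T)\tfrac{d}{dT}$ of $\mathcal{R}$: differentiating the semi-linearity relation $\sigma_a(fz)=\sigma_a(f)\sigma_a(z)$ at $a=1$ gives the Leibniz rule $\nabla(fz)=\nabla(f)z+f\nabla(z)$ for $f\in\mathcal{R}$, $z\in D_{\rm rig}$.

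\textbf{Linearization.} Write $n=\deg P$ and $P(X)=X^n+\sum_{i=0}^{n-1}a_iX^i$ with $a_i\in L$, and set $d=\dim_L V$, so that $D_{\rm rig}$ is free of rank $d$ over $\mathcal{R}$. On $M:=D_{\rm rig}^{\oplus n}$, which is free of rank $nd$ over $\mathcal{R}$, I would introduce the companion connection
$$\nabla_P(z_0,\dots,z_{n-1})=\bigl(\nabla z_0-z_1,\ \dots,\ \nabla z_{n-2}-z_{n-1},\ \nabla z_{n-1}+\textstyle\sum_{i=0}^{n-1}a_iz_i\bigr),$$
which is again a connection over $\mathcal{R}$ (we only added $L$-linear terms, so Leibniz still holds). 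The $L$-linear map $z\mapsto(z,\nabla z,\dots,\nabla^{n-1}z)$ is then an isomorphism from $D_{\rm rig}^{P(\nabla)=0}$ onto the space $M^{\nabla_P=0}$ of horizontal sections of $M$: a horizontal vector $(z_0,\dots,z_{n-1})$ must satisfy $z_i=\nabla^iz_0$ and $\nabla^nz_0=-\sum a_i\nabla^iz_0$, i.e. $P(\nabla)z_0=0$, and conversely. Hence it suffices to bound $\dim_L M^{\nabla_P=0}$ by $\operatorname{rk}_{\mathcal{R}}M=nd$.

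\textbf{The Wronskian bound.} I would then prove the general lemma that for \emph{any} connection $(M,\nabla_M)$ over $\mathcal{R}$ with $M$ free of finite rank $m$, one has $\dim_L M^{\nabla_M=0}\le m$; applied to the companion connection this finishes the proof. Suppose $s_1,\dots,s_k\in M^{\nabla_M=0}$ are $L$-linearly independent; I claim they are $\mathcal{R}$-linearly independent. If not, pick a nontrivial relation $\sum_{i\in S}f_is_i=0$ with all $f_i\in\mathcal{R}\setminus\{0\}$ and $|S|$ minimal. Applying $\nabla_M$ (and using $\nabla_M s_i=0$) gives $\sum_{i\in S}\nabla(f_i)s_i=0$; eliminating one index $j$ yields $\sum_{i\in S\setminus\{j\}}\bigl(\nabla(f_j)f_i-f_j\nabla(f_i)\bigr)s_i=0$, so by minimality $\nabla(f_j)f_i=f_j\nabla(f_i)$ for all $i\in S$, i.e. $\nabla(f_i/f_j)=0$ in $\operatorname{Frac}(\mathcal{R})$. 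Since $\nabla=t(1+T)\tfrac{d}{dT}$ and $t(1+T)\neq 0$ in the domain $\mathcal{R}$, the kernel of $\nabla$ on $\operatorname{Frac}(\mathcal{R})$ is the field of constants of $\operatorname{Frac}(\mathcal{R})$ for $\tfrac{d}{dT}$, which is $L$ (two elements of $\mathcal{R}$ converge on a common annulus, so their ratio is meromorphic there, and a meromorphic function with vanishing derivative is constant). Thus $f_i=\lambda_if_j$ with $\lambda_i\in L$, so $f_j\sum_i\lambda_is_i=0$, hence $\sum_i\lambda_is_i=0$ with not all $\lambda_i$ zero, contradicting $L$-independence. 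Therefore $M^{\nabla_M=0}$ has $L$-dimension at most $m$, and we conclude $\dim_L D_{\rm rig}^{P(\nabla)=0}\le nd=\deg(P)\cdot\dim_L V$.

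\textbf{Main obstacle.} This argument is essentially formal: the one genuine input is that the field of constants of $\operatorname{Frac}(\mathcal{R})$ for $\tfrac{d}{dT}$ is exactly $L$, everything else being bookkeeping with the companion matrix and the Leibniz rule. Note that, in the spirit of the paper, the proof uses neither $\varphi$ nor the Galois origin of $D_{\rm rig}$ beyond its freeness of finite rank over $\mathcal{R}$ — it is purely a statement about finite free differential modules over the Robba ring.
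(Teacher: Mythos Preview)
Your proof is correct and relies on the same key input as the paper's, namely that $(\operatorname{Frac}(\mathcal{R}))^{\nabla=0}=L$ (the paper proves this as a separate lemma, using Lazard's results to check that $f'g=fg'$ forces $f/g\in\mathcal{R}$ by comparing multiplicities of zeros on each sub-annulus; your ``meromorphic with zero derivative is constant'' is the same statement, stated a bit more briskly). The organization, however, differs. The paper proceeds by induction on $\deg P$: after extending $L$ so that $P$ splits, it writes $P=(X-\alpha)Q$ and observes that $\nabla-\alpha$ sends $D_{\rm rig}^{P(\nabla)=0}$ into $D_{\rm rig}^{Q(\nabla)=0}$, with kernel $D_{\rm rig}^{\nabla=\alpha}$; the base case is exactly your Wronskian lemma applied to the single module $D_{\rm rig}$ (the paper phrases it as injectivity of $D_{\rm rig}^{\nabla=\alpha}\otimes_L\mathcal{R}\to D_{\rm rig}$). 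Your companion-matrix linearization replaces this induction by a single application of the Wronskian bound to the rank-$nd$ module $(D_{\rm rig}^{\oplus n},\nabla_P)$. This is a mild but genuine gain: you avoid extending $L$ to split $P$, and you make transparent that the statement is really a fact about finite free differential modules over $\mathcal{R}$, independent of any $(\varphi,\Gamma)$-structure. Conversely, the paper's inductive approach has the virtue of isolating the rank-one case explicitly, which it then reuses verbatim later (e.g.\ in the analysis of $X_2$ and $W$).
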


\begin{proof} Quitte \`{a} remplacer $L$ par une extension finie, on peut supposer que toutes les racines
de $P$ dans $\overline{\qp}$ sont dans $L$. On d\'{e}montre le th\'{e}or\`{e}me par r\'{e}currence sur $\deg P$.
Pour traiter le cas $\deg(P)=1$, nous avons besoin de quelques pr\'{e}liminaires.

\begin{lemma}\label{primo}
  On a $({\rm Frac}(\mathcal{R}))^{\nabla=0}=L$.
 \end{lemma}

 \begin{proof}
 Ce r\'{e}sultat est probablement standard, mais faute d'une r\'{e}f\'{e}rence voici une preuve.
 Rappelons que $\nabla(f)=t\cdot (1+T)f'(T)$ pour $f\in\mathcal{R}$, o\`{u} $t=\log(1+T)$. En particulier
 $\mathcal{R}^{\nabla=0}=L$. La condition $\nabla\left(\frac{f}{g}\right)=0$ \'{e}quivaut \`{a} $f'\cdot g=f\cdot g'$. Soit $r>0$ tel que
 $f$ et $g$ soient analytiques sur la couronne $0<v_p(T)\leq r$. Si $0<r'\leq r$ et si $z\in \mathbf{C}_p$ est un z\'{e}ro
 de $g$ dans la couronne $r'\leq v_p(T)\leq r$, la relation $f'\cdot g=g'\cdot f$ montre que $z$ a la m\^{e}me multiplicit\'{e}
 (finie) dans $f$ et dans $g$. D'apr\`{e}s des r\'{e}sultats standard de Lazard (voir par exemple la prop. 4.12 de \cite{Be1}), le quotient
 $\frac{f}{g}$ est donc analytique dans la couronne $r'\leq v_p(T)\leq r$. Comme cela vaut pour tout $r'\leq r$, le quotient
 $\frac{f}{g}$ est analytique sur la couronne $0<v_p(T)\leq r$ et donc est dans $\mathcal{R}^{\nabla=0}=L$.

 \end{proof}

   Le cas $\deg (P)=1$ suit alors du lemme ci-dessus et du fait que le rang de $D_{\rm rig}$ sur $\mathcal{R}$ est
   $\dim_L(V)$.

 \begin{lemma}\label{secondo}
  Soit $\alpha\in L$. L'application naturelle $D_{\rm rig}^{\nabla=\alpha}\otimes_{L} \mathcal{R}\to
  D_{\rm rig}$ est injective.
 \end{lemma}

\begin{proof}
  Il s'agit de v\'{e}rifier que si $z_1,z_2,...,z_d\in D_{\rm rig}^{\nabla=\alpha}$ sont libres sur $L$, alors ils sont
  libres sur $\mathcal{R}$. Soit (quitte \`{a} renum\'{e}roter les $z_i$) $\sum_{i=1}^{k} f_i\cdot z_i=0$ une relation
  de longueur minimale sur $\mathcal{R}$ et soit $g_i=\frac{f_i}{f_1}\in {\rm Frac}(\mathcal{R})$. En appliquant
  $\nabla$ et en utilisant le fait que $\nabla(z_i)=\alpha\cdot z_i$, on obtient $\sum_{i=1}^{k} \nabla(g_i)\cdot z_i=0$.
  Comme $\nabla(g_1)=0$, par minimalit\'{e} on obtient $\nabla(g_i)=0$ pour tout $i$. Le lemme \ref{primo} permet alors de conclure.

\end{proof}

Supposons le th\'{e}or\`{e}me d\'{e}montr\'{e} pour $\deg P=n$ et montrons-le pour
$\deg P=n+1$. Soit $P=(X-\alpha)\cdot Q(X)$, avec $Q\in L[X]$ et notons $W_1=D_{\rm rig}^{P(\nabla)=0}$ et $W_2=D_{\rm rig}^{Q(\nabla)=0}$. Alors $\nabla-\alpha$ est un op\'{e}rateur $L$-lin\'{e}aire de $W_1$ dans $W_2$, dont le noyau est de dimension au plus $\dim_L(V)$ (lemme \ref{secondo})
et dont l'image est de dimension au plus $\dim_L W_2\leq \deg (Q)\cdot \dim_L V$. Le r\'{e}sultat s'en d\'{e}duit.

\end{proof}

\subsection{Finitude et annulation du module de Jacquet}\label{mod}

 Dans la suite on suppose que
$V$ et $\Pi$ sont comme dans \ref{LLpadique} (donc $V$ est de dimension $2$). Pour les autres notations
utilis\'{e}es dans la suite (en particulier $\nabla$ et $D_{\rm rig}\boxtimes\p1$), voir \ref{LLpadique} et \ref{finiteness}.

 L'ingr\'{e}dient essentiel pour l'\'{e}tude de $J^*(\Pi^{\rm an})$ est le r\'{e}sultat suivant, dans
 lequel $u^+$ d\'{e}signe
l'action infinit\'{e}simale de l'unipotent sup\'{e}rieur $U$ de ${\rm GL}_2(\qp)$.
Soient $a$ et $b$ les poids de Hodge-Tate g\'{e}n\'{e}ralis\'{e}s de $V$.

\begin{proposition} \label{inf}
  Pour tout $z=(z_1,z_2)\in D_{\rm rig}\boxtimes \p1$ on a
$$ u^+(z)= \left(tz_1, -\frac{(\nabla-a)(\nabla-b)z_2}{t}\right).$$
\end{proposition}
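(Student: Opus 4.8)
\emph{Sketch of the approach.} I would compute the two components of $u^+(z)$ separately, working on the $\mathrm{GL}_2(\qp)$-sheaf on $\p1$. The first component is immediate: the upper Borel stabilises the chart $\zp$, and $\left(\begin{smallmatrix}1&b\\0&1\end{smallmatrix}\right)$ acts on $D_{\rm rig}\boxtimes\zp=D_{\rm rig}$ by multiplication by $(1+T)^b=\exp(bt)$, so differentiating at $b=0$ gives $u^+(z)_1=tz_1$ (only a neighbourhood of $1$ matters for the infinitesimal action). For the second component, conjugation by $w=\left(\begin{smallmatrix}0&1\\1&0\end{smallmatrix}\right)$ turns $\left(\begin{smallmatrix}1&b\\0&1\end{smallmatrix}\right)$ into the lower unipotent $\left(\begin{smallmatrix}1&0\\b&1\end{smallmatrix}\right)$, which for $b\in p\zp$ stabilises both $\zp$ and $w\cdot\zp$; through the identification of the sections over $w\cdot\zp$ with $D_{\rm rig}$ built into the definition of $D_{\rm rig}\boxtimes\p1$, this gives $u^+(z)_2=Yz_2$, where $Y\in\mathrm{End}_L(D_{\rm rig})$ is the derivative at $\varepsilon=0$ of the operator by which $\left(\begin{smallmatrix}1&0\\\varepsilon&1\end{smallmatrix}\right)$ acts on $D_{\rm rig}\boxtimes\zp$. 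The same conjugation shows that, on the second component, $u^-=\left(\begin{smallmatrix}0&0\\1&0\end{smallmatrix}\right)$ acts by multiplication by $t$ and $h=\left(\begin{smallmatrix}1&0\\0&-1\end{smallmatrix}\right)$ acts by $w(\delta_D)-2\nabla$ (write $\left(\begin{smallmatrix}a&0\\0&a^{-1}\end{smallmatrix}\right)=\left(\begin{smallmatrix}a&0\\0&a\end{smallmatrix}\right)\left(\begin{smallmatrix}a^{-2}&0\\0&1\end{smallmatrix}\right)$ and recall that the Borel acts on $\zp$-sections through $\varphi$, $\Gamma$ and $\delta_D$).

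The key point is then to bring in the Casimir $\Omega=2u^-u^++h+\tfrac12h^2$ of $\mathrm{SL}_2$. It commutes with the whole $\mathrm{GL}_2(\qp)$-action, so on the first component it commutes with $\varphi$, with $\Gamma$, and with multiplication by every element of $\mathcal{R}^+$ (the $L$-span of the $(1+T)^b$, $b\in\zp$, being dense in $\mathcal{R}^+$ by Amice), hence with the $(\varphi,\Gamma)$- and $\mathcal{R}$-structure of $D_{\rm rig}$; since $V$ is absolutely irreducible, Schur's lemma ($\mathrm{End}_{(\varphi,\Gamma)}(D_{\rm rig})=\mathrm{End}_{\mathrm{Gal}}(V)=L$) forces $\Omega$ to act on $D_{\rm rig}\boxtimes\p1$ by a scalar $\lambda\in L$. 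To pin $\lambda$ down I would pass to the quotient $\Pi^{\rm an}$: when $V$ is de Rham with distinct integral Hodge--Tate weights, $\Pi=\Pi(V)$ contains the locally algebraic vectors $\mathrm{Sym}^{|a-b|-1}\otimes(\text{smooth})$, on which $\Omega$ acts by $\tfrac{(a-b)^2-1}{2}$, so $\lambda=\tfrac{(a-b)^2-1}{2}$; the general case follows by Zariski-density of such $V$. Evaluating $\Omega$ on the second component gives the identity of operators on $D_{\rm rig}$
$$\lambda=2\,t\,Y+\bigl(w(\delta_D)-2\nabla\bigr)+\tfrac12\bigl(w(\delta_D)-2\nabla\bigr)^2,$$
where $t\,Y$ means "$Y$ followed by multiplication by $t$"; substituting $w(\delta_D)=w(\det V)-w(\chi)=(a+b)-1$ and expanding turns this into $t\,Y=-(\nabla-a)(\nabla-b)$. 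Finally $(\nabla-a)(\nabla-b)$ maps $D_{\rm rig}$ into $tD_{\rm rig}$ --- its reduction modulo $t$ is the Sen operator, of characteristic polynomial $(X-a)(X-b)$ --- and multiplication by $t$ is injective on $D_{\rm rig}$, so $Y=-\frac{(\nabla-a)(\nabla-b)}{t}$, which is the asserted second component.

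The main obstacle is the determination of $\lambda$, i.e.\ of the infinitesimal character of $\Pi^{\rm an}$. The only alternative I see is the brute-force route: differentiate Colmez's explicit formula for the action of $\left(\begin{smallmatrix}1&0\\ p\zp&1\end{smallmatrix}\right)$ on $D_{\rm rig}$ (it respects $\zp=\zpet\amalg p\zp$, acting through $w_D$ on $D_{\rm rig}^{\psi=0}$ and through $\varphi$ on $\varphi(D_{\rm rig})$), which produces the functional equation $Yz=p\,\varphi(Y\psi(z))+w_D\bigl(t\cdot w_D((1-\varphi\psi)z)\bigr)$; but checking that $-\frac{(\nabla-a)(\nabla-b)}{t}$ solves it boils down to the delicate identity $(\nabla-a)(\nabla-b)v=-t\,w_D\bigl(t\,w_D(v)\bigr)$ on $D_{\rm rig}^{\psi=0}$ --- a computation about $w_D$ of exactly the kind the paper only resolves, in the triangular case, with Theorem~\ref{th2} (compare the footnote there). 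The Casimir argument bypasses it by importing the classical infinitesimal character of $\Pi(V)$; this is in substance the main result of \cite{D1}, which I would otherwise simply invoke.
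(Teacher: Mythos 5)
Votre premier paragraphe (d\'ecomposer $z={\rm Res}_{\zp}(z)+w\cdot\varphi\circ\psi(z_2)$, remarquer que la conjugaison par $w$ \'echange $u^+$ et $u^-$ et que les unipotents concern\'es stabilisent $\zp$, de sorte que tout se ram\`ene \`a conna\^itre l'action infinit\'esimale de $u^+$ et $u^-$ sur $D_{\rm rig}=D_{\rm rig}\boxtimes\zp$) est exactement la preuve du texte: celle-ci s'arr\^ete l\`a et invoque le th\'eor\`eme principal de \cite{D1}, qui dit pr\'ecis\'ement que $u^+$ agit sur $D_{\rm rig}$ par multiplication par $t$ et $u^-$ par $-\frac{1}{t}(\nabla-a)(\nabla-b)$. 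Sous sa forme << de repli >> (votre r\'eduction plus la citation de \cite{D1} que vous mentionnez \`a la fin), votre argument co\"incide donc avec celui du papier.

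Votre d\'eveloppement via le Casimir est en fait une tentative de red\'emonstration de \cite{D1}, et il en est proche dans l'esprit: que $\Omega$ induise sur $D_{\rm rig}\boxtimes\zp$ un endomorphisme continu commutant \`a $\varphi$, $\Gamma$ et \`a la multiplication par les $(1+T)^b$, puis qu'il soit scalaire par Schur (pleine fid\'elit\'e de $V\mapsto D_{\rm rig}(V)$ et irr\'eductibilit\'e absolue) est correct, m\^eme s'il faut dire un mot du passage de la $\mathcal{R}^+$-lin\'earit\'e \`a la $\mathcal{R}$-lin\'earit\'e (densit\'e et continuit\'e). La vraie lacune est la d\'etermination de $\lambda$: l'existence de vecteurs localement alg\'ebriques non nuls dans $\Pi(V)$ pour $V$ de Rham \`a poids distincts est la compatibilit\'e de Colmez avec la correspondance classique, un th\'eor\`eme nettement plus lourd que l'\'enonc\'e vis\'e, et << le cas g\'en\'eral suit par densit\'e de Zariski >> n'est pas un argument: il pr\'esuppose que $V\mapsto\Pi(V)^{\rm an}$, l'action de $\mathfrak{gl}_2$ et donc $\lambda$ s'interpolent analytiquement dans une famille o\`u les points de Rham sont denses, ce que vous ne construisez pas (et qui est un travail en soi). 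On peut fixer $\lambda$ de fa\c{c}on purement interne: comme $\left(\begin{smallmatrix} 1 & 0 \\ p\zp & 1\end{smallmatrix}\right)$ stabilise $\zp$, l'op\'erateur $u^-$ pr\'eserve $D_{\rm rig}\boxtimes\zp$; en \'ecrivant $\Omega=2u^+u^--h+\tfrac12 h^2$ sur cette composante, o\`u $u^+=t$ et $h=2\nabla-w(\delta_D)$, on obtient $2t\,u^-=\lambda+h-\tfrac12h^2$, donc le membre de droite, polyn\^ome de degr\'e $2$ en $\nabla$, doit envoyer $D_{\rm rig}$ dans $tD_{\rm rig}$, c'est-\`a-dire annuler l'op\'erateur de Sen; cela force ce polyn\^ome \`a \^etre $-2(\nabla-a)(\nabla-b)$ et d\'etermine $\lambda=\tfrac{(a-b)^2-1}{2}$ sans aucun ingr\'edient global (au cas pr\`es o\`u l'op\'erateur de Sen est scalaire, \`a traiter \`a part) --- c'est en substance la voie suivie dans \cite{D1}. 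Enfin, vous avez raison de juger impraticable la voie << force brute >> via $w_D$.
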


\begin{proof}
 On peut \'{e}crire $z=z_1+w\cdot (\varphi\circ \psi(z_2))$, o\`{u} $w=\left(\begin{smallmatrix} 0 & 1 \\1 & 0\end{smallmatrix}\right)$
et o\`{u} l'on voit $D_{\rm rig}$ comme sous-espace de $D_{\rm rig}\boxtimes \p1$ comme dans \ref{LLpadique}. On a donc
$$u^+(z)=u^+(z_1)+w\cdot u^-(\varphi(\psi(z_2)))$$ et le r\'{e}sultat d\'{e}coule alors de \cite[th. 1]{D1}.

\end{proof}

   D'apr\`{e}s la proposition \ref{fini}, le $L$-espace vectoriel $$X=\{z\in D_{\rm rig}| (\nabla-a)(\nabla-b)z=0\}$$ 
est de dimension au plus $4$ (on fera mieux par la suite). Comme $X$ est stable par $\varphi$, on a 
$X\subset \varphi(D_{\rm rig})$, de telle sorte que $(0,z)\in D_{\rm rig}\boxtimes\p1$ pour tout $z\in X$. 
Rappelons aussi que les vecteurs localement analytiques $\Pi^{\rm an}$ de $\Pi$ vivent dans une suite
exacte de ${\rm GL}_2(\qp)$-modules topologiques
  $$0\to (\Pi^{\rm an})^*\otimes\delta_D\to D_{\rm rig}\boxtimes\p1\to \Pi^{\rm an}\to 0.$$

\begin{proposition}\label{annulation} On a une \'{e}galit\'{e} de sous-$L$-espaces vectoriels de $D_{\rm rig}\boxtimes\p1$
  $$J^*(\Pi^{\rm an})\otimes\delta_D=\{(0,z)|z\in X\}.$$ En particulier, $J^*(\Pi^{\rm an})$ est de dimension au plus 
$4$ sur $L$.
\end{proposition}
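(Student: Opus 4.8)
The plan is to identify both $J^{*}(\Pi^{\rm an})\otimes\delta_{D}$ and $\{(0,z):z\in X\}$ with the kernel of $u^{+}$ acting on $D_{\rm rig}\boxtimes\p1$; the only point that is not formal is that this kernel is contained in the sub-object $(\Pi^{\rm an})^{*}\otimes\delta_{D}$.

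\emph{Reduction of $J^{*}(\Pi^{\rm an})$ to a $u^{+}$-kernel.} As $\delta_{D}$ is a character of $\qpet$ composed with the determinant, it is trivial on $U$, so the $U$-action on $(\Pi^{\rm an})^{*}\otimes\delta_{D}$ is the one on $(\Pi^{\rm an})^{*}$, and inside $D_{\rm rig}\boxtimes\p1$ we have $J^{*}(\Pi^{\rm an})\otimes\delta_{D}=\bigl((\Pi^{\rm an})^{*}\otimes\delta_{D}\bigr)^{U}$. For any locally analytic representation $\pi$ the subspace $u^{+}\pi$ is $U$-stable (because $U$ is commutative) and contains $(u-1)v$ for $u$ in a small enough subgroup of $U\simeq\qp$, hence for all $u\in U$ by writing $u=u_{0}^{p^{n}}$; so $J(\pi)=\pi/\overline{u^{+}\pi}$ and, by duality, $J^{*}(\pi)=\ker(u^{+}\mid\pi^{*})$. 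Since $(\Pi^{\rm an})^{*}\otimes\delta_{D}$ is $u^{+}$-stable in $D_{\rm rig}\boxtimes\p1$, this gives
$$J^{*}(\Pi^{\rm an})\otimes\delta_{D}=\ker\bigl(u^{+}\mid D_{\rm rig}\boxtimes\p1\bigr)\cap\bigl((\Pi^{\rm an})^{*}\otimes\delta_{D}\bigr).$$

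\emph{The kernel of $u^{+}$ on $D_{\rm rig}\boxtimes\p1$.} By Proposition \ref{inf}, $u^{+}(z_{1},z_{2})=\bigl(tz_{1},-t^{-1}(\nabla-a)(\nabla-b)z_{2}\bigr)$. Since $t=\log(1+T)$ is a nonzerodivisor in $\mathcal{R}$ and $D_{\rm rig}$ is free over $\mathcal{R}$, the first coordinate is $0$ iff $z_{1}=0$, and the second iff $(\nabla-a)(\nabla-b)z_{2}=0$, i.e. $z_{2}\in X$; moreover every $z\in X$ lies in $\varphi(D_{\rm rig})$ (as $\varphi$ is injective and $X$ is finite-dimensional and $\varphi$-stable, so $X=\varphi(X)$), so $(0,z)$ does define an element of $D_{\rm rig}\boxtimes\p1$. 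Hence $\ker(u^{+}\mid D_{\rm rig}\boxtimes\p1)=\{(0,z):z\in X\}$, and combining with the previous paragraph
$$J^{*}(\Pi^{\rm an})\otimes\delta_{D}=\{(0,z):z\in X\}\cap\bigl((\Pi^{\rm an})^{*}\otimes\delta_{D}\bigr).$$
Already this yields the finiteness statement, since $\dim_{L}X\le 4$ by Proposition \ref{fini} applied to $P=(X-a)(X-b)$.

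\emph{Conclusion and main obstacle.} It remains to prove the reverse inclusion $\{(0,z):z\in X\}\subseteq(\Pi^{\rm an})^{*}\otimes\delta_{D}$, i.e. that for every $z\in X$ the image of $(0,z)$ under the projection $D_{\rm rig}\boxtimes\p1\to\Pi^{\rm an}$ of \cite[th. V.2.20]{C5} vanishes. This is the heart of the matter: it amounts to saying that $(\Pi^{\rm an})^{*}$ contains every solution of the differential equation $(\nabla-a)(\nabla-b)z=0$, and it does not follow from the formula of Proposition \ref{inf} alone. I would establish it by unwinding Colmez's description of this projection — using the ${\rm GL}_{2}(\qp)$-equivariance of the exact sequence to move $(0,z)$ onto the standard chart (legitimate since $z\in\varphi(D_{\rm rig})$) — and then invoking that $z$ solves the differential equation; this is the step where the $p$-adic Langlands correspondence for ${\rm GL}_{2}(\qp)$ genuinely enters, rather than the purely differential input of the previous two paragraphs.
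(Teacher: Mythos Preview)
Your outline is close to the paper's, but there are two genuine gaps.

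\textbf{The identification $(\pi^*)^U=\ker(u^+\mid\pi^*)$.} Your argument ``write $u=u_0^{p^n}$ with $u_0$ small'' goes the wrong way: in $U\simeq(\qp,+)$ this means $b=p^nb_0$, so $|b_0|=|b|/|p^n|$ is \emph{larger}, not smaller, and by the ultrametric inequality an element of large absolute value is not a finite sum of small ones. In fact $\ker(u^+)$ can be strictly larger than the $U$-invariants for a locally analytic $U$-representation (a linear functional killed by $u^+$ has locally constant, not constant, orbit map on $\qp$). The paper does not attempt this in general: it first observes the trivial inclusion $(\pi^*)^U\subset\ker(u^+)$, then --- once $\ker(u^+)$ is known to be finite-dimensional --- invokes an extra piece of structure, the action of $\left(\begin{smallmatrix}p&0\\0&1\end{smallmatrix}\right)$, to force equality (Lemma~\ref{ut}): conjugation by $\left(\begin{smallmatrix}p^k&0\\0&1\end{smallmatrix}\right)$ shrinks any $b\in\qp$ into the small subgroup fixing the finite-dimensional $\ker(u^+)$. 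Without that dilation your step fails.

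\textbf{The inclusion $\{(0,z):z\in X\}\subset(\Pi^{\rm an})^*\otimes\delta_D$.} You correctly flag this as the heart of the matter but do not prove it; ``unwinding Colmez's projection'' is not an argument. The paper's device is concrete and short: since $X$ is finite-dimensional over $L$ and stable under $\varphi$, the orbit $\{\varphi^n(z)\}_{n\ge 0}$ stays in a finite-dimensional space, and a result of Berger (\cite[prop.~3.2]{Be1}) then forces $z\in\tilde{D}_{\rm rig}^+=(\mathbf{\tilde{B}}_{\rm rig}^+\otimes_{\qp}V)^{\operatorname{Ker}\chi}$. One then quotes \cite[lemme~V.2.17]{C5}, which gives $\tilde{D}_{\rm rig}^+\subset(\Pi^{\rm an})^*\otimes\delta_D$ inside $D_{\rm rig}\boxtimes\p1$; since $(0,z)=w\cdot z$ and the subspace is ${\rm GL}_2(\qp)$-stable, one concludes. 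This is exactly where the $p$-adic Langlands input enters, as you suspected, but the route is via $\varphi$-finiteness and $\tilde{D}_{\rm rig}^+$, not via the explicit projection formula.
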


\begin{proof}

 Soit $\pi=\Pi^{\rm an}$. L'inclusion
  $\pi^*\otimes\delta_D\subset D_{\rm rig}\boxtimes \p1$ induit une inclusion
  $$J^*(\pi)\otimes\delta_D\subset (D_{\rm rig}\boxtimes\p1)^{U}\subset
  (D_{\rm rig}\boxtimes \p1)^{u^+=0}$$
   et, d'apr\`{e}s la proposition \ref{inf}, on a
   $$(D_{\rm rig}\boxtimes \p1)^{u^+=0}=\{(0,z)| z\in X\}.$$
La proposition
   \ref{fini} montre alors que $\dim_L J^*(\pi)\leq 4$. 

 Montrons maintenant que les inclusions pr\'{e}c\'{e}dentes sont
des \'{e}galit\'{e}s. Nous aurons besoin du r\'{e}sultat suivant, qui montre en particulier
que $(D_{\rm rig}\boxtimes\p1)^{U}=
  (D_{\rm rig}\boxtimes \p1)^{u^+=0}$.

\begin{lemma}\label{ut}
 Soit $M$ une $L$-repr\'{e}sentation localement analytique de $ \left(\begin{smallmatrix} p^{\mathbf{Z}} & \qp \\0 & 1\end{smallmatrix}\right)$.
Si $M^{u^+=0}$ est de dimension finie sur $L$, alors $M^{u^+=0}=M^U$.
\end{lemma}

\begin{proof}
 Il existe $n$ tel que $M^{u^+=0}$ soit invariant par 
$\left(\begin{smallmatrix} 1 & p^n\zp\\0 & 1\end{smallmatrix}\right)$. Si $m\in M^{u^+=0}$ et
$a\in \qp$, on a alors pour tout $k\geq n-v_p(a)$
  $$ \left(\begin{smallmatrix} p^k & 0 \\0 & 1\end{smallmatrix}\right) \left(\begin{smallmatrix} 1 & a \\0 & 1\end{smallmatrix}\right)m=
 \left(\begin{smallmatrix} 1 & p^ka \\0 & 1\end{smallmatrix}\right) \left(\begin{smallmatrix} p^k & 0 \\0 & 1\end{smallmatrix}\right)m=
 \left(\begin{smallmatrix} p^k & 0 \\0 & 1\end{smallmatrix}\right)m,$$ donc $ \left(\begin{smallmatrix} 1 & a \\0 & 1\end{smallmatrix}\right)m=m$.
Cela permet de conclure.
\end{proof}

Pour finir la preuve de la proposition \ref{annulation}, il reste
  \`{a} voir que $(0,z)\in \pi^*\otimes\delta_D$ pour tout $z\in X$.
Les $\varphi^n(z)$ vivent dans $X$, qui est un $L$-espace vectoriel de dimension
 finie. Cela entra\^{i}ne (voir \cite[prop 3.2]{Be1}) que $z\in \tilde{D}_{\rm rig}^+$, o\`{u}\footnote{Rappelons que  $\mathbf{\tilde{B}}_{\rm rig}^+=\cap_{n\geq 1}
    \varphi^n(\mathbf{B}_{\rm cris}^+)$.}
$\tilde{D}_{\rm rig}^+=(\mathbf{\tilde{B}}_{\rm rig}^+\otimes_{\qp} V)^{\rm {Ker}\chi}$. Mais d'apr\`{e}s
\cite[lemme V.2.17]{C5}, le module $\tilde{D}_{\rm rig}^+$ est inclus dans $\pi^*\otimes
    \delta_D$. Le r\'{e}sultat en d\'{e}coule (noter que $(0,z)=w\cdot z$). 
\end{proof}

\begin{corollary}
  Si $V$ n'est pas trianguline, alors $J^*(\Pi^{\rm an})=0$.
\end{corollary}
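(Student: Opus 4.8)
The corollary follows almost immediately from what has been established, so my plan is essentially to assemble the pieces.

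First I would invoke Proposition \ref{annulation}, which gives the identification $J^*(\Pi^{\rm an})\otimes\delta_D=\{(0,z)\mid z\in X\}$ with $X=\{z\in D_{\rm rig}\mid (\nabla-a)(\nabla-b)z=0\}$. Thus it suffices to show that $X=0$ when $V$ is not trianguline. Since twisting by the character $\delta_D$ does not affect vanishing, the statement $J^*(\Pi^{\rm an})=0$ is equivalent to $X=0$.

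Next I would analyze $X$. The space $X$ is stable under $\varphi$ and under $\Gamma$, so if $X\ne 0$ it contains a nonzero $\varphi$- and $\Gamma$-stable $L$-subspace. The key point is that a nonzero element $z\in X$ generates, over $\mathcal{R}$, a sub-$(\varphi,\Gamma)$-module of $D_{\rm rig}$ of rank $1$ (if it has rank $2$ one argues with the other factor). Indeed, by Lemma \ref{secondo} applied to the generalized eigenspaces, elements of $X$ lying in a single eigenline $D_{\rm rig}^{\nabla=\alpha}$ that are $L$-linearly independent are $\mathcal{R}$-linearly independent; so the $\mathcal{R}$-span of such a $z$ has rank equal to the number of independent eigenvectors one picks. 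If this span has rank $1$, it is (after saturating) a sub-$(\varphi,\Gamma)$-module $\mathcal{R}(\delta_1)\hookrightarrow D_{\rm rig}$ for a suitable continuous character $\delta_1$, which exhibits $V$ as an extension of $\mathcal{R}(\delta_2)$ by $\mathcal{R}(\delta_1)$, i.e. $V$ is trianguline — contradiction. If instead the $\mathcal{R}$-span has rank $2$, then $z_1,z_2\in X$ span $D_{\rm rig}$ over $\mathcal{R}$, whence $D_{\rm rig}=D_{\rm rig}^{\nabla=a}\otimes_L\mathcal{R}+D_{\rm rig}^{\nabla=b}\otimes_L\mathcal{R}$; examining the $\varphi$-action on these finite-dimensional $L$-spaces produces $\varphi$-stable $L$-lines, hence again rank-$1$ sub-objects and triangulinity.

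The point I expect to require the most care is the reduction to the rank-$1$ case and the verification that a nonzero $z\in X$ really does yield a \emph{saturated} rank-one sub-$(\varphi,\Gamma)$-module isomorphic to some $\mathcal{R}(\delta_1)$ (one must check the saturation is still $\varphi$- and $\Gamma$-stable and free of rank $1$, and that it is of the form $\mathcal{R}(\delta)$ — this is standard for rank-one $(\varphi,\Gamma)$-modules over $\mathcal{R}$ but deserves a line). Once that is in place, the definition of trianguline representation recalled in the introduction (existence of an exact sequence $0\to\mathcal{R}(\delta_1)\to D_{\rm rig}\to\mathcal{R}(\delta_2)\to 0$) gives the contradiction directly, and we conclude $X=0$, hence $J^*(\Pi^{\rm an})=0$.
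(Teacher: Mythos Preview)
Your reduction via Proposition \ref{annulation} to showing $X=0$ is correct and matches the paper. The subsequent argument, however, is both more complicated than needed and contains a genuine gap. The case split on the $\mathcal{R}$-rank of the span of $X$ is unnecessary, and in your rank-$2$ case you only produce $\varphi$-stable $L$-lines: a $\varphi$-eigenvector $z$ gives $\mathcal{R}z$ stable under $\varphi$, but there is no reason for $\mathcal{R}z$ to be $\Gamma$-stable unless $z$ is also a $\Gamma$-eigenvector, so you have not actually constructed a rank-$1$ sub-$(\varphi,\Gamma)$-module there. You also never address the possibility that $\varphi$ (or $\Gamma$) has no eigenvalue in $L$, nor the case $a=b$.

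The paper's argument bypasses all of this. Since $X$ is a finite-dimensional $L$-vector space on which the commuting operators $\varphi$ and $\sigma_a$ ($a\in\zpet$) act, one replaces $L$ by a finite extension and finds directly a common eigenvector $z\in X$ for $\varphi$ and $\Gamma$. Then $\mathcal{R}z$ is automatically a $(\varphi,\Gamma)$-stable rank-$1$ submodule, and one invokes \cite[lemme 3.2]{C3} (also recorded as Lemma \ref{propre}(a) in the paper) to conclude that $V$ is trianguline. This is a two-line proof; your rank-$1$ case is essentially a roundabout version of it, and your rank-$2$ case is superfluous once you realize the common-eigenvector step works on $X$ itself.
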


\begin{proof}
  Il suffit de v\'{e}rifier que $X=0$ si $V$ n'est pas trianguline. 
Si $X\ne 0$, il existe (apr\`{e}s avoir remplac\'{e}
    $L$ par une extension finie) un vecteur propre
   pour $\varphi$ et $\Gamma$ dans $X$. On en d\'{e}duit que $V$ est trianguline (utiliser
   le lemme 3.2 de \cite{C3}).
\end{proof}

\subsection{Le module de Jacquet dans le cas triangulin}\label{conv2}

 \textit{Dans la suite on fixe un point $s=(\delta_1,\delta_2,\mathcal{L})\in \mathcal{S}_{\rm irr}$
et on note $V=V(s)$, $\Pi=\Pi(s)$, etc}. On note $e_i$ la base canonique de $\mathcal{R}(\delta_i)$ (rappelons que
$\varphi(e_i)=\delta_i(p)e_i$ et $\sigma_a(e_i)=\delta_i(a)e_i$) et $p_s$ la projection de $D_{\rm rig}$ sur
$\mathcal{R}(\delta_2)$. Noter que les poids de Hodge-Tate g\'{e}n\'{e}ralis\'{e}s de $V$ sont
$w(\delta_1)$ et $w(\delta_2)$ (\cite[prop 4.5]{C3}). De plus, comme $D_{\rm rig}$ est de pente $0$ et que $V$ est irr\'{e}ductible,
la th\'{e}orie des pentes de Kedlaya montre que $v_p(\delta_1(p))=-v_p(\delta_2(p))>0$.

  Le but de cette partie est de d\'{e}montrer le th\'{e}or\`{e}me \ref{th4} de l'introduction. Ce th\'{e}or\`{e}me
d\'{e}coule de la proposition \ref{annulation} et de la proposition \ref{prop2} ci-dessous, qui d\'{e}termine l'espace $X$ (d\'{e}fini 
dans \ref{mod}). Cela va demander quelques pr\'{e}liminaires.

  Rappelons que $s\in \mathcal{S}_{*}^{\rm cris}$ est dit exceptionnel si $\delta_1=x^{w(s)}\delta_2$. Si $s\in\mathcal{S}_{*}^{\rm cris}$,
  il existe $e_2'\in D_{\rm rig}$ tel que $p_s(e_2')=t^{w(s)}e_2$ et $\sigma_a(e_2')=a^{w(s)}\delta_2(a)e_2'$ pour tout $a\in \zpet$.
  Si $s$ n'est pas exceptionnel, on peut choisir $e_2'$ tel que $\varphi(e_2')=p^{w(s)}\delta_2(p)e_2'$ et alors
  $e_2'$ est unique \`{a} multiplication par un \'{e}l\'{e}ment de $L^*$ pr\`{e}s. Si $s$ est exceptionnel,
  on peut choisir $e_2'$ tel que $\varphi(e_2')=p^{w(s)}\delta_2(p)e_2'+e_1$, et alors $e_2'$ est unique \`{a} addition
  pr\`{e}s d'un \'{e}l\'{e}ment de $Le_1$. Ces r\'{e}sultats sont d\'{e}duits du calcul de la cohomologie de
  $\mathcal{R}(\delta)$, voir \cite[prop. 3.10]{C3}. On aura besoin dans la suite du r\'{e}sultat suivant
   \cite[lemme 3.22]{C7}:
   
   \begin{lemma}\label{propre} Soit $V$ une $L$-repr\'{e}sentation irr\'{e}ductible de dimension $2$ de ${\rm Gal}(\overline{\qp}/\qp)$ et
   soit $D_{\rm rig}$ son $(\varphi,\Gamma)$-module. 
   
   a) Si $D_{\rm rig}$ poss\`{e}de un vecteur propre pour les actions de $\varphi$ et $\Gamma$, alors $V$ est trianguline.
   
   b) Si $V$ correspond \`{a} un point $s\in \mathcal{S}_{*}^{\rm st}\cup \mathcal{S}_{*}^{\rm ng}$ ou si
   $s\in \mathcal{S}_{*}^{\rm cris}$ est exceptionnel, alors les vecteurs propres pour l'action
   de $\varphi$ et $\Gamma$ sont dans $\cup_{k\geq 0} L^*\cdot t^ke_1$. 
   
   c) Si $V$ correspond \`{a} un point $s\in \mathcal{S}_{*}^{\rm cris}$ non exceptionnel, les
   vecteurs propres pour $\varphi$ et $\Gamma$ sont dans $L^*\cdot t^k e_1$ ou 
  dans $L^*\cdot t^ke_2'$ pour un $k\geq 0$.
   
   \end{lemma}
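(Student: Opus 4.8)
The proof will rest on three results of Colmez from \cite{C3}: the classification of $(\varphi,\Gamma)$-modules of rank $1$ over $\mathcal{R}$ (every saturated rank-$1$ submodule of $D_{\rm rig}$ is isomorphic to some $\mathcal{R}(\delta)$), the computation of $H^0(\mathcal{R}(\delta))$, and the classification of the triangulations of $D_{\rm rig}(s)$. I would begin with an elementary preliminary: a nonzero $f\in\mathcal{R}$ with $\varphi(f)=\alpha f$ ($\alpha\in L^*$) and $\sigma_a(f)=\beta_af$ (where $a\mapsto\beta_a$ is automatically a continuous character of $\zpet$) is of the form $c\,t^k$ with $c\in L^*$, $k\ge0$, $\alpha=p^k$, $\beta_a=a^k$. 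Indeed, letting $\delta$ be the character of $\qpet$ with $\delta(p)=\alpha$, $\delta|_{\zpet}=\beta$, and $e$ the canonical basis of $\mathcal{R}(\delta^{-1})$, the element $fe$ lies in $\mathcal{R}(\delta^{-1})^{\varphi=1,\Gamma=1}=H^0(\mathcal{R}(\delta^{-1}))$, which by the computation of $H^0$ in \cite{C3} is nonzero only when $\delta^{-1}=x^{-k}$ for some $k\ge0$, in which case it is the line $L\cdot t^ke$; hence $f\in L^*t^k$. The same statement holds verbatim inside $\mathcal{R}(\delta_i)=\mathcal{R}e_i$: the eigenvectors there are the $L^*t^ke_i$, with eigencharacter $(\varphi\mapsto p^k\delta_i(p),\ \sigma_a\mapsto a^k\delta_i(a))$.

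\emph{Part a).} Let $z\in D_{\rm rig}$ be a common eigenvector. Then $\mathcal{R}z$ is a $(\varphi,\Gamma)$-stable sub-$\mathcal{R}$-module of rank $1$; its saturation in $D_{\rm rig}$ is a saturated rank-$1$ sub-$(\varphi,\Gamma)$-module, hence isomorphic to some $\mathcal{R}(\delta')$, and the quotient $D_{\rm rig}/\mathcal{R}(\delta')$ is torsion-free of rank $1$, hence isomorphic to some $\mathcal{R}(\delta'')$. This is a triangulation of $D_{\rm rig}$, i.e. $V$ is trianguline; this is essentially \cite[lemme~3.2]{C3}.

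\emph{Parts b) and c).} Now write $V=V(s)$ with triangulation $0\to\mathcal{R}(\delta_1)\to D_{\rm rig}\xrightarrow{p_s}\mathcal{R}(\delta_2)\to0$, and let $z$ be a common eigenvector. If $p_s(z)=0$ then $z\in\mathcal{R}(\delta_1)$ and the preliminary gives $z\in L^*t^ke_1$ for some $k\ge0$; these are genuine eigenvectors, so this case contributes exactly the family $\cup_{k\ge0}L^*t^ke_1$. Suppose $p_s(z)\ne0$. Then $p_s(z)=c\,t^me_2$ with $c\in L^*$, $m\ge0$, and applying $p_s$ to the eigenvector equations shows $z$ has eigencharacter $(\varphi\mapsto p^m\delta_2(p),\ \sigma_a\mapsto a^m\delta_2(a))$. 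Saturating, $\mathcal{R}z$ sits inside a saturated rank-$1$ submodule $\mathcal{R}z'$ with $z'$ again a common eigenvector (a basis of a rank-$1$ $(\varphi,\Gamma)$-module is one), $z=h z'$ for some $h\in\mathcal{R}$, and $p_s(z')\in L^*t^je_2$ with $1\le j\le m$ — here $j\ge1$ because $j=0$ would make $p_s$ restrict to an isomorphism $\mathcal{R}z'\xrightarrow{\sim}\mathcal{R}(\delta_2)$, splitting the triangulation and contradicting the irreducibility of $V$. Thus $\mathcal{R}z'\cong\mathcal{R}(x^j\delta_2)$ together with $D_{\rm rig}/\mathcal{R}z'\cong\mathcal{R}(x^{-j}\delta_1)$ is a \emph{second} triangulation of $D_{\rm rig}(s)$. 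By Colmez's classification of triangulations \cite[prop.~3.10 and §3]{C3}, $D_{\rm rig}(s)$ admits a second triangulation only when $s\in\mathcal{S}_*^{\rm cris}$ is non-exceptional, and then the sub-object of that second triangulation is forced (by its isomorphism type) to be $\mathcal{R}(x^{w(s)}\delta_2)$, so $j=w(s)$. Consequently, if $s\in\mathcal{S}_*^{\rm st}\cup\mathcal{S}_*^{\rm ng}$ or $s$ is exceptional crystalline, the case $p_s(z)\ne0$ is impossible and every common eigenvector lies in $\cup_{k\ge0}L^*t^ke_1$, which is b). If $s\in\mathcal{S}_*^{\rm cris}$ is non-exceptional, then $z'$ and $e_2'$ are both common eigenvectors with $p_s$-image in $L^*t^{w(s)}e_2$ and (since $s$ is non-exceptional, $\varphi(e_2')=p^{w(s)}\delta_2(p)e_2'$) the same eigencharacter $(\varphi\mapsto p^{w(s)}\delta_2(p),\ \sigma_a\mapsto a^{w(s)}\delta_2(a))$; hence $z'-\lambda e_2'\in\mathcal{R}(\delta_1)$ is a common eigenvector with that eigencharacter for a suitable $\lambda\in L^*$, and by the preliminary it must be $0$, since a nonzero $t^ke_1$ with eigencharacter $x^{w(s)}\delta_2$ would force $k=0$ and $\delta_1=x^{w(s)}\delta_2$, contradicting non-exceptionality. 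Therefore $z'\in L^*e_2'$, and $z=h z'\in L^*t^{m-w(s)}e_2'$ with $m-w(s)=m-j\ge0$. Together with the case $p_s(z)=0$, this gives c).

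\textbf{Main obstacle.} The preliminary and part a) are formal given the rank-$1$ classification and the value of $H^0(\mathcal{R}(\delta))$. The substance of the argument is the input used in the crux step: that $D_{\rm rig}(s)$ possesses a second triangulation exactly when $s$ is crystalline non-exceptional, with that triangulation being $(x^{w(s)}\delta_2,x^{-w(s)}\delta_1)$. This is precisely where the trichotomy $\mathcal{S}_*^{\rm ng}\amalg\mathcal{S}_*^{\rm cris}\amalg\mathcal{S}_*^{\rm st}$ intervenes, and I would import it wholesale from \cite{C3} (the cohomology groups $H^i(\mathcal{R}(\delta))$ and the way the extension class of $D_{\rm rig}$ depends on $\mathcal{L}$) rather than reprove it. An equivalent route, avoiding the bookkeeping with second triangulations, is to compute directly the class of the pullback extension $0\to\mathcal{R}(\delta_1)\to p_s^{-1}(t^m\mathcal{R}e_2)\to\mathcal{R}(x^m\delta_2)\to0$ in $H^1(\mathcal{R}(x^{-m}\delta_1\delta_2^{-1}))$ and determine for which $m$ it vanishes; this uses the same \cite{C3} input in dual form.
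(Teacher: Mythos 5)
The paper offers no proof of this lemma: it is quoted from \cite[lemme 3.22]{C7}, and the surrounding facts about $e_2'$ are themselves recalled from \cite[prop. 3.10]{C3}. So there is no internal argument to compare yours with; judged on its own, your reconstruction from \cite{C3} is correct and follows the same circle of ideas Colmez uses (eigenvector $\Rightarrow$ saturated rank-one sub-$(\varphi,\Gamma)$-module, via the rank-one classification and $H^0(\mathcal{R}(\delta))$, then confrontation with the possible triangulations of $D_{\rm rig}(s)$).

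Two steps should be made precise. First, to exclude $j=0$ you say a splitting of the triangulation would ``contradict the irreducibility of $V$''; by itself a splitting into two rank-one $(\varphi,\Gamma)$-submodules does not produce a subrepresentation of $V$, since neither factor is \'etale. The actual contradiction is slope-theoretic: irreducibility gives $v_p(\delta_1(p))=-v_p(\delta_2(p))>0$ (Kedlaya, as recalled in \ref{conv2}), and $D_{\rm rig}$, being pure of slope $0$, cannot contain a rank-one submodule of negative slope, so $p_s$ admits no section. Second, the input you import from \cite{C3} must be the classification of triangulations \emph{as filtrations} (uniqueness of the saturated sub-object outside the non-exceptional crystalline case), not merely the description of the parameters $s'$ with $V(s')\simeq V(s)$: the latter alone would not exclude, when $s$ is exceptional, a second saturated line abstractly isomorphic to $\mathcal{R}(\delta_1)$ but distinct from $\mathcal{R}e_1$. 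In that exceptional case one can instead conclude with exactly the facts recalled in \ref{conv2}: a common eigenvector $z'$ with $p_s(z')\in L^*t^{w(s)}e_2$ would, after scaling, be an admissible choice of $e_2'$ that is a $\varphi$-eigenvector, whereas every admissible choice satisfies $\varphi(e_2')=p^{w(s)}\delta_2(p)e_2'+e_1$ up to replacing $e_2'$ by $e_2'+ce_1$, a modification that never removes the defect $e_1$ because $\delta_1(p)=p^{w(s)}\delta_2(p)$ precisely in the exceptional case. Your alternative route via the class of the pullback extension in $H^1(\mathcal{R}(x^{-m}\delta_1\delta_2^{-1}))$ is the cleanest way to package both points; with them spelled out, the argument is complete.
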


   Le lemme suivant sera utilis\'{e} constamment dans la suite. Il fournit aussi une 
d\'{e}monstration tr\`{e}s directe de la proposition 1.19 de \cite{C7}.

   \begin{lemma} \label{eqdiff}
 Soit $k\in L$. L'espace des solutions de l'\'{e}quation $\nabla f+kf=0$ (avec
 $f\in\mathcal{R}$) est $\{0\}$ si $k\notin\{0,-1,-2,...\}$ et $L\cdot t^{-k}$
 si $k\in\{0,-1,-2,...\}$.
\end{lemma}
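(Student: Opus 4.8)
�The plan is to solve the first-order differential equation $\nabla f + kf = 0$ explicitly on the Robba ring by using the concrete description $\nabla(f) = t(1+T)f'(T)$ given in \ref{not}, where $t = \log(1+T)$.

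\medskip

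First I would rewrite the equation. Since $\nabla(f) = t(1+T)f'(T)$, the equation $\nabla f + kf = 0$ becomes $t(1+T)f'(T) = -kf(T)$. This is a linear ODE; away from the zeros of the coefficients one expects solutions of the form (a constant times) $t^{-k}$, since $\nabla(t) = t(1+T)\cdot \frac{1}{1+T} = t$, hence $\nabla(t^m) = m t^m$ for integers $m\geq 0$, and more generally the formal computation suggests $t^{-k}$ solves the equation. The point is to determine for which $k$ such a solution actually lies in $\mathcal{R}$, and to prove uniqueness up to scalars.

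\medskip

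For uniqueness, I would argue as follows. Suppose $f_1, f_2$ are two nonzero solutions. Then $\nabla(f_1/f_2) = \frac{(\nabla f_1)f_2 - f_1(\nabla f_2)}{f_2^2} = \frac{(-kf_1)f_2 - f_1(-kf_2)}{f_2^2} = 0$ in $\mathrm{Frac}(\mathcal{R})$, so by Lemme \ref{primo} we get $f_1/f_2 \in L$, i.e. the solution space has dimension at most $1$. It therefore remains only to decide when it is nonzero and to exhibit the generator. If $k \in \{0,-1,-2,\dots\}$, write $k = -m$ with $m \in \mathbf{N}$; then $t^m \in \mathcal{R}$ (indeed $t = \log(1+T) = \sum_{n\geq 1}(-1)^{n-1}T^n/n$ converges on the whole open disk, so lies in $\mathcal{R}^+ \subset \mathcal{R}$), and $\nabla(t^m) = m t^m = -k t^m$, so $t^{-k} = t^m$ is a solution and the space is $L\cdot t^{-k}$. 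Conversely, if $k \notin \{0,-1,-2,\dots\}$ I must show there is no nonzero solution in $\mathcal{R}$. For this, suppose $f \neq 0$ satisfies $t(1+T)f'(T) = -kf(T)$. Looking at the order of vanishing at $T=0$: write $f = c T^j + \cdots$ with $c \neq 0$ and $j = \mathrm{ord}_{T=0}(f) \in \mathbf{Z}$ (possible since $f$ is a Laurent series convergent near $0$ after shrinking the annulus). Since $t = T + O(T^2)$ has order $1$ at $0$ and $1+T$ has order $0$, the left side $t(1+T)f'(T)$ has order $j$ at $T=0$ (as $f' = cjT^{j-1}+\cdots$ when $j\neq 0$, giving order $1+(j-1)=j$; and if $j = 0$ one checks the left side has order $\geq 1 > 0 = j$ unless the constant term mechanism forces... ), while the right side $-kf$ has order $j$; comparing leading coefficients gives $cj = -kc$ when $j \neq 0$, hence $k = -j \in \mathbf{Z}$, and then $j = -k \geq 0$ forces $k \in \{0,-1,-2,\dots\}$, contradiction; and the case $j = 0$ (constant term nonzero) forces $-k\cdot c = 0$ hence $k=0$, again a contradiction. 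Alternatively, and more cleanly, I would divide by the candidate solution: on the open annulus the function $g = f/t^{-k}$ (interpreting $t^{-k}$ via a local branch, or working formally) satisfies $\nabla g = 0$, but $t^{-k} \notin \mathcal{R}$ when $k \notin \{0,-1,\dots\}$, so this requires the order-of-vanishing bookkeeping above to be made rigorous rather than the naive division.

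\medskip

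The main obstacle is the converse direction: ruling out nonzero solutions in $\mathcal{R}$ when $k \notin \{0,-1,-2,\dots\}$. The subtlety is that $\mathcal{R}$ contains genuine Laurent tails (negative powers of $T$), so the solution need not be a power series, and one must track the behavior both at $T = 0$ and on the rest of the annulus; the valuation/Newton-polygon argument at $T=0$ handles the local obstruction, while convergence on the annulus is automatic once $f$ is assumed to be in $\mathcal{R}$. I expect the cleanest writeup uses the leading-term analysis at $T=0$ sketched above, combined with Lemme \ref{primo} for the uniqueness, so that the whole lemma reduces to these two short computations. (Pulita's remark mentioned in the acknowledgments may be exactly the simplification that makes this argument slick.)
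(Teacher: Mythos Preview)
Your uniqueness argument via Lemme \ref{primo} and the verification that $t^{-k}$ is a solution when $k\in\{0,-1,-2,\dots\}$ are both fine. The gap is in the converse direction: your leading-term analysis ``at $T=0$'' does not make sense in $\mathcal{R}$. The annulus of convergence is $p^{-r}\le |T|<1$, so $T=0$ is not in the domain, and a generic element $\sum_{n\in\mathbf{Z}} a_n T^n$ of the Robba ring may have infinitely many nonzero coefficients $a_n$ with $n<0$ (e.g.\ $\tfrac{p}{T-p}=\sum_{n\ge 1}p^n T^{-n}$). There is therefore no well-defined integer $j=\mathrm{ord}_{T=0}(f)$, and the sentence ``write $f=cT^j+\cdots$'' is not justified. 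You sense this yourself (``$\mathcal{R}$ contains genuine Laurent tails''), but the proposed fix via ``valuation/Newton-polygon at $T=0$'' does not address the problem.

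The paper avoids this entirely by replacing $T$-divisibility with $t$-divisibility: since $t=\log(1+T)$ has its zeros $\zeta_{p^n}-1$ \emph{inside} the annulus, every nonzero $f\in\mathcal{R}$ has a finite order of vanishing along $t$, so there is a largest $j\ge 0$ with $f\in t^j\mathcal{R}$. Writing $f=t^j g$ with $g\notin t\mathcal{R}$ and using $\nabla(t^j)=j\,t^j$ turns the equation into $\nabla g+(k+j)g=0$; then the single observation $\nabla(\mathcal{R})\subset t\mathcal{R}$ forces $k+j=0$ (hence $k\in\{0,-1,-2,\dots\}$) and $\nabla g=0$, so $g\in L$. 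This is exactly your leading-term idea, but carried out at a point where it is actually available; if you want to salvage your argument, redo the order-of-vanishing comparison at $T=\zeta_p-1$ rather than at $T=0$, and you will recover the paper's proof.
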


\begin{proof}
Soit $f\in \mathcal{R}$ une solution non nulle de l'\'{e}quation $\nabla f+kf=0$
  et soit $j$ le plus grand entier positif tel que $f\in t^j\cdot \mathcal{R}$.
  Posons $f=t^j\cdot g$, avec $g\in\mathcal{R}-t\cdot\mathcal{R}$. On a
  $\nabla g+(k+j)g=0$. Comme $\nabla(\mathcal{R})\subset t\cdot \mathcal{R}$, on obtient
 $k+j=0$ et $g\in L$. Le r\'{e}sultat s'en d\'{e}duit.

\end{proof}

  Notons $X_2=\{z\in D_{\rm rig}| (\nabla-w(\delta_2))z=0\}$, de telle sorte que
  $(\nabla-w(\delta_1))X\subset X_2$.

 \begin{lemma}\label{prec}
  On a $X_2=0$ si $w(s) \notin \{0,-1,-2,...\}$ et $X_2=L\cdot t^{-w(s)}e_1$ si $w(s)\in\{0,-1,-2,...\}$.

 \end{lemma}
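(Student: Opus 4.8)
The plan is to analyze the equation $(\nabla - w(\delta_2))z = 0$ for $z \in D_{\rm rig}$, using the short exact sequence $0 \to \mathcal{R}(\delta_1) \to D_{\rm rig} \to \mathcal{R}(\delta_2) \to 0$ together with Lemma \ref{eqdiff}. First I would push $z$ forward via $p_s$: since $p_s$ is $\Gamma$-equivariant and $\nabla(e_2) = w(\delta_2) e_2$, writing $p_s(z) = g\,e_2$ with $g \in \mathcal{R}$, the relation $(\nabla - w(\delta_2))z = 0$ forces $\nabla(g)\,e_2 + g\cdot w(\delta_2)e_2 = w(\delta_2) g\,e_2$, i.e. $\nabla(g) = 0$, hence $g \in L$ by Lemma \ref{primo} (or directly: $\nabla(\mathcal{R}) \subset t\mathcal{R}$). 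But an element of $L\cdot e_2$ cannot lift to an eigenvector in $D_{\rm rig}$ unless it is zero: indeed, if $g \neq 0$ then after rescaling $z$ would be a $\nabla$-eigenvector with $p_s(z) = e_2$, and combined with the fact that $X_2$ is stable under $\varphi$ this would produce a $(\varphi, \Gamma)$-eigenvector mapping onto a nonzero multiple of $e_2$, contradicting Lemma \ref{propre} (whose parts (b), (c) confine all $(\varphi,\Gamma)$-eigenvectors to $L^*\cdot t^k e_1$ or, in the non-exceptional crystalline case, $L^*\cdot t^k e_2'$, none of which surjects onto $L^*\cdot e_2$ since $w(s) > 0$ there would force $t^k e_2'$ with $k \geq 0$ to have image in $t^{w(s)}\mathcal{R}(\delta_2)$). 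So $p_s(z) = 0$, i.e. $z \in \mathcal{R}(\delta_1)$.

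Next I would solve the equation inside $\mathcal{R}(\delta_1)$. Write $z = f\,e_1$ with $f \in \mathcal{R}$; then $(\nabla - w(\delta_2))z = 0$ becomes $\nabla(f) + (w(\delta_1) - w(\delta_2))f = 0$, that is $\nabla(f) + w(s)\,f = 0$ since $w(s) = w(\delta_1) - w(\delta_2)$. By Lemma \ref{eqdiff} applied with $k = w(s)$, this has only the zero solution when $w(s) \notin \{0, -1, -2, \dots\}$, and solution space $L\cdot t^{-w(s)}$ otherwise. Translating back, $X_2 = 0$ when $w(s) \notin \{0,-1,-2,\dots\}$ and $X_2 = L\cdot t^{-w(s)}e_1$ when $w(s) \in \{0,-1,-2,\dots\}$, which is exactly the claim.

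The one delicate point is the reduction $p_s(z) = 0$: I want to rule out that $z$ maps to a nonzero constant multiple of $e_2$. The cleanest route is to observe that $X_2$ is a finite-dimensional $L$-subspace of $D_{\rm rig}$ stable under $\varphi$ (since $\varphi$ commutes with $\nabla$ up to the scalar built into the $(\varphi,\Gamma)$-module structure, and $\nabla \varphi = p\,\varphi\nabla$ — here one must be slightly careful and instead use that $\nabla$ and $\varphi$ preserve $X_2$ because the normalization of $\nabla$ on $D_{\rm rig}$ is chosen so that it commutes with $\varphi$, cf. \ref{finiteness}). Then if $X_2 \neq 0$ contained an element with $p_s(z) \neq 0$, after extending $L$ one extracts a common eigenvector for $\varphi$ and $\Gamma$ inside $X_2$ whose image under $p_s$ is a nonzero element of $\mathcal{R}(\delta_2)$ that is itself a $(\varphi,\Gamma)$-eigenvector, forcing it into $L^* \cdot t^m e_2$; but by Lemma \ref{propre} the only $(\varphi,\Gamma)$-eigenvectors of $D_{\rm rig}$ lie in $L^*\cdot t^k e_1$ (so $p_s$-image zero) or, in the non-exceptional crystalline case, in $L^* \cdot t^k e_2'$ with $k \geq 0$, whose $p_s$-image is in $t^{w(s)+k}\mathcal{R}(\delta_2)$ with $w(s) \geq 1$, hence never a nonzero multiple of $e_2$. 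This contradiction gives $p_s(X_2) = 0$ and completes the argument. Alternatively — and perhaps more simply — one notes that if $p_s(z) = c\,e_2$ with $c \in L^*$ then, rescaling, $z$ would be a lift of $e_2$ killed by $\nabla - w(\delta_2)$; applying $(\nabla - w(\delta_1))$ is then not needed, one directly contradicts the structure of $D_{\rm rig}(s)$ as recorded in \cite[prop.~3.10]{C3}, since a $\nabla$-eigenvector lifting $e_2$ would split the extension infinitesimally in a way incompatible with $s \in \mathcal{S}_{\rm irr}$ unless $w(s)$ lies in the appropriate range, which is precisely the crystalline split already accounted for. I would present whichever of these is shortest given the tools the paper has already set up.
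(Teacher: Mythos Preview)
Your overall strategy --- use the triangulation to reduce $(\nabla-w(\delta_2))z=0$ to the scalar equations $\nabla f + w(s)f=0$ on $\mathcal{R}(\delta_1)$ and $\nabla g=0$ on $\mathcal{R}(\delta_2)$, then invoke Lemma~\ref{eqdiff} --- is exactly the paper's. For $w(s)\notin\{0,-1,-2,\dots\}$ your argument is essentially the paper's: here $X_2\cap\mathcal{R}e_1=0$, so $X_2$ injects into $Le_2$, is at most one-dimensional, and any nonzero vector is automatically a $(\varphi,\Gamma)$-eigenvector with $p_s\neq 0$, whence the contradiction via Lemma~\ref{propre}.

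The gap is in the case $w(s)\in\{0,-1,-2,\dots\}$. There $X_2\cap\mathcal{R}e_1=L\cdot t^{-w(s)}e_1$ is nonzero, so $X_2$ could a priori be two-dimensional, and the assertion ``one extracts a common eigenvector for $\varphi$ and $\Gamma$ inside $X_2$ whose image under $p_s$ is nonzero'' is not justified: a common eigenvector certainly exists, but it could perfectly well be $t^{-w(s)}e_1$ itself, which has $p_s$-image zero. You have not explained why some eigenvector escapes $\mathcal{R}e_1$. (Your alternative sketch via ``infinitesimal splitting'' and \cite[prop.~3.10]{C3} is too vague to stand as a proof.)

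The paper closes this gap by a direct construction rather than an abstract extraction: assuming $z\in X_2$ with $p_s(z)=e_2$, one has $\varphi(z)-\delta_2(p)z\in X_2\cap\mathcal{R}e_1=L\,t^{-w(s)}e_1$; since $p^{-w(s)}\delta_1(p)\neq\delta_2(p)$ (their valuations differ), one may replace $z$ by $z+c\,t^{-w(s)}e_1$ for suitable $c$ to arrange $\varphi(z)=\delta_2(p)z$; commutation of $\varphi$ and $\Gamma$ then forces $\sigma_a(z)=\delta_2(a)z$, so $z$ splits the triangulation --- contradiction. Your eigenvector approach can be salvaged along the same lines (the $\varphi$-eigenvalues on $X_2$ are $p^{-w(s)}\delta_1(p)$ and $\delta_2(p)$, distinct, so the second eigenspace lies outside $\mathcal{R}e_1$), but this valuation argument is the missing ingredient and must be made explicit.
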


\begin{proof}

  Comme $\nabla e_j=w(\delta_j)\cdot e_j$, l'\'{e}quation $(\nabla-w(\delta_2))(fe_1)=0$ \'{e}quivaut \`{a} $\nabla f+w(s)f=0$
  et l'\'{e}quation
  $(\nabla-w(\delta_2))(fe_2)=0$ \'{e}quivaut \`{a} $\nabla f=0$ et donc \`{a} $f\in L$.

Supposons que $w(s)\notin\{0,-1,...\}$. La suite exacte
 $0\to\mathcal{R}(\delta_1)\to D_{\rm rig}\to \mathcal{R}(\delta_2)\to 0$
 et l'observation du premier paragraphe montrent que $X_2$ s'injecte dans
 $Le_2$. Supposons que $X_2\ne 0$, donc $\dim_L X_2=1$. Soit $x\in X_2-\{0\}$, donc
 $x\notin\mathcal{R}e_1$ et $x$ est vecteur propre
 pour $\varphi$ et $\Gamma$. Le lemme \ref{propre} montre que
 $s\in \mathcal{S}_{*}^{\rm cris}$ n'est pas exceptionnel et qu'il existe $c\in L^*$ et $k\in\mathbb{N}$ tels que
 $x=ct^k\cdot e_2'$.
  Comme $x\in X_2$ et $\nabla e_2'=(w(s)+w(\delta_2))e_2'$, on
obtient $w(s)=-k\in \{0,-1,-2,...\}$, une contradiction. Donc $X_2=0$.

  Supposons maintenant que $w(s)\in\{0,-1,-2,...\}$. Le premier paragraphe
  nous fournit une suite exacte $0\to Lt^{-w(s)}e_1\to X_2\to Le_2$ et il reste \`{a} voir
  qu'elle n'est pas exacte \`{a} droite. Supposons donc qu'il existe $z\in X_2$ qui s'envoie sur $e_2$.  Alors
$\varphi(z)-\delta_2(p)z$ est dans
$X_2\cap \mathcal{R}e_1=Lt^{-w(s)}e_1$ et donc, quitte \`{a} travailler\footnote{Cela
utilise le fait que $p^{-w(s)}\delta_1(p)\ne \delta_2(p)$, car $v_p(\delta_1(p))>0$ et $v_p(\delta_2(p))<0$.} avec $z+ct^{-w(s)}e_1$ pour un $c\in L$ bien choisi, on peut supposer
 que $\varphi(z)=\delta_2(p)z$. Soit $\gamma\in\Gamma$. Il
existe $a\in L$ tel que $\gamma(z)=\delta_2(\chi(\gamma))z+at^{-w(s)}e_1$. Comme $\varphi$ et
$\Gamma$ commutent, un petit calcul donne $a(p^{-w(s)}\delta_1(p)-\delta_2(p))=0$, donc $a=0$. Mais alors la suite exacte
$0\to\mathcal{R}(\delta_1)\to D_{\rm rig}\to\mathcal{R}(\delta_2)\to 0$
est scind\'{e}e, contradiction.
\end{proof}

 \begin{proposition} \label{prop2} 1) Si $s\in\mathcal{S}_*^{\rm st}$ ou si $w(s)\notin {\Z}^*$,
   alors $X=L\cdot e_1$.

  2) Si $w(s)\in\{...,-2,-1\}$, alors $X=L\cdot e_1\oplus L\cdot t^{-w(s)}e_1$.

  3) Si $s\in\mathcal{S}_*^{\rm cris}$, alors $X=L\cdot e_1\oplus L\cdot e_2'$.

\end{proposition}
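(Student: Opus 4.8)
The plan is to analyze the space $X = \{z \in D_{\rm rig} \mid (\nabla - w(\delta_1))(\nabla - w(\delta_2))z = 0\}$ by exploiting the short exact sequence $0 \to \mathcal{R}(\delta_1) \to D_{\rm rig} \to \mathcal{R}(\delta_2) \to 0$, together with the finite-dimensionality of $X$ (Proposition \ref{fini}), the classification of $(\varphi,\Gamma)$-eigenvectors (Lemma \ref{propre}), and the computation of $X_2$ just carried out in Lemma \ref{prec}. First I would observe that $X$ is stable under $\varphi$ and $\Gamma$, since $\nabla$ commutes with both (up to the relevant scalars), and that the operator $\nabla - w(\delta_1)$ maps $X$ into $X_2 = D_{\rm rig}^{\nabla = w(\delta_2)}$. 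This gives an exact sequence $0 \to \ker \to X \to X_2$, where $\ker = \{z \in D_{\rm rig} \mid (\nabla - w(\delta_1))z = 0\}$. Inside $\mathcal{R}(\delta_1)$, Lemma \ref{eqdiff} applied to $fe_1$ (where the equation $(\nabla - w(\delta_1))(fe_1)=0$ becomes $\nabla f = 0$, i.e. $f \in L$) shows $L e_1 \subseteq \ker$; conversely any element of $\ker$ maps into $D_{\rm rig}/\mathcal{R}(\delta_1) = \mathcal{R}(\delta_2)$ to a solution of $\nabla f = 0$ in $\mathcal{R}(\delta_2)$-coordinates, hence lands in $L e_2$, and then one argues as in Lemma \ref{prec} (using $v_p(\delta_1(p)) > 0 > v_p(\delta_2(p))$, which forbids the sequence from splitting) that it cannot hit $e_2$ nontrivially unless $V$ is reducible. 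So $\ker = L e_1$ always, and thus $\dim_L X \le 1 + \dim_L X_2$.

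Next I would treat the three cases by feeding in Lemma \ref{prec}. In case 1) ($s \in \mathcal{S}_*^{\rm st}$ or $w(s) \notin \mathbf{Z}^*$), I split further: if $w(s) \notin \{0,-1,-2,\dots\}$, then $X_2 = 0$ by Lemma \ref{prec}, so $X = \ker = L e_1$ and we are done; if $w(s) \in \{0,-1,-2,\dots\}$ — which within case 1) forces $w(s) = 0$, since $w(s) \in \mathbf{Z}^*$ is excluded and $s \in \mathcal{S}_*^{\rm st}$ requires $w(s) \in \mathbf{N}^*$ — then $X_2 = L \cdot t^{-w(s)} e_1 = L e_1 = \ker$, and $(\nabla - w(\delta_1))$ sends $X$ into $X_2$ but kills $\ker = L e_1 = X_2$; I claim the map $X \to X_2$ is then zero, for otherwise some $z \in X$ with $(\nabla - w(\delta_1))z = c e_1 \neq 0$ would exist, and since $e_1 \in \mathcal{R}(\delta_1)$ one could try to solve $(\nabla - w(\delta_1))y = e_1$ inside $\mathcal{R}(\delta_1)$, i.e. $\nabla f = 1$ with $f \in \mathcal{R}$ — impossible since $\nabla(\mathcal{R}) \subseteq t\mathcal{R}$ and $1 \notin t\mathcal{R}$. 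Hence $X = \ker = L e_1$. In case 2) ($w(s) \in \{-1,-2,\dots\}$), Lemma \ref{prec} gives $X_2 = L \cdot t^{-w(s)} e_1$ with $-w(s) \ge 1$, so $t^{-w(s)} e_1 \notin \ker$ (it satisfies $\nabla$-eigenvalue $w(\delta_2)$, not $w(\delta_1)$); I must produce a preimage, i.e. $z \in X$ with $(\nabla - w(\delta_1))z = t^{-w(s)} e_1$, equivalently $\nabla f = t^{-w(s)}$ with $f e_1$ the candidate — but this is exactly solved (up to scalar) by $f$ a suitable multiple of $t^{-w(s)}$, indeed $\nabla(t^m) = m t^m \cdot$(unit)$\dots$ — more cleanly: $t^{-w(s)} e_1 \in D_{\rm rig}^{(\nabla - w(\delta_1))(\nabla - w(\delta_2)) = 0}$ directly, since $(\nabla - w(\delta_2))(t^{-w(s)} e_1) = (\nabla(t^{-w(s)}) + w(s) t^{-w(s)}) e_1 = 0$ by Lemma \ref{eqdiff}. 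So $L e_1 \oplus L t^{-w(s)} e_1 \subseteq X$, and since $\dim_L X \le 1 + \dim_L X_2 = 2$ and these two are visibly independent, equality holds.

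In case 3) ($s \in \mathcal{S}_*^{\rm cris}$), recall $w(s) \in \mathbf{N}^*$, so $w(s) \notin \{0,-1,-2,\dots\}$ and Lemma \ref{prec} gives $X_2 = 0$; naively this would force $X = L e_1$, which is wrong. The subtlety is that $X_2$ was defined with eigenvalue $w(\delta_2)$, but the factorization $(\nabla - w(\delta_1))(\nabla - w(\delta_2))$ could be reorganized, or rather: $X$ contains not just $\ker(\nabla - w(\delta_1))$ but elements on which $(\nabla - w(\delta_2))z$ is a nonzero element of $D_{\rm rig}^{\nabla = w(\delta_1)} = L e_1$ (the kernel of the OTHER factor), and we should instead look at $(\nabla - w(\delta_2)): X \to D_{\rm rig}^{\nabla = w(\delta_1)} = L e_1$. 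So the right exact sequence is $0 \to D_{\rm rig}^{\nabla = w(\delta_2)} \to X \to D_{\rm rig}^{\nabla = w(\delta_1)}$, but $D_{\rm rig}^{\nabla = w(\delta_2)} = X_2 = 0$ here and $D_{\rm rig}^{\nabla = w(\delta_1)} \supseteq L e_1$ — wait, this gives $\dim X \le 1$ again. \textbf{The main obstacle}, and the point requiring the most care, is precisely this apparent contradiction: the resolution is that in the crispe case $e_2'$ satisfies $p_s(e_2') = t^{w(s)} e_2$ and $\sigma_a(e_2') = a^{w(s)}\delta_2(a) e_2'$, hence $\nabla e_2' = (w(s) + w(\delta_2)) e_2' = w(\delta_1) e_2'$ — so $e_2' \in D_{\rm rig}^{\nabla = w(\delta_1)}$, meaning $D_{\rm rig}^{\nabla = w(\delta_1)}$ is \emph{two}-dimensional, spanned by $e_1$ and $e_2'$! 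Thus $X \supseteq D_{\rm rig}^{\nabla = w(\delta_1)} = L e_1 \oplus L e_2'$ (the first factor $\nabla - w(\delta_1)$ already kills these), giving $\dim_L X \ge 2$. For the reverse inequality I would argue: $(\nabla - w(\delta_1)): X \to X_2 = 0$ forces $X \subseteq D_{\rm rig}^{\nabla = w(\delta_1)}$, and the latter is exactly $L e_1 \oplus L e_2'$ by Lemma \ref{propre}(c) combined with the observation that any $\nabla$-eigenvector with eigenvalue $w(\delta_1)$ which is also killed by nothing else must, after scaling, be a $(\varphi,\Gamma)$-eigenvector (using that $X$ is $\varphi$-stable and finite-dimensional, pass to a common eigenvector), hence lies in $L^* t^k e_1$ or $L^* t^k e_2'$ for some $k \ge 0$; matching $\nabla$-eigenvalues pins down $k = 0$ in each case. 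Therefore $X = L e_1 \oplus L e_2'$, completing case 3) and the proposition.
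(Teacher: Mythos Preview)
Your overall strategy --- use the triangulation and the map $\nabla - w(\delta_1): X \to X_2$ together with Lemma~\ref{prec} --- matches the paper's. However there is a genuine gap, and it sits exactly where you distinguish the semistable case from the crystalline case.

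Your opening computation of $\ker = \{z : (\nabla - w(\delta_1))z = 0\}$ is wrong: projecting to $\mathcal{R}(\delta_2)$, the equation $(\nabla - w(\delta_1))(fe_2)=0$ becomes $(\nabla - w(s))f = 0$, \emph{not} $\nabla f = 0$. Hence the image lands in $L\,t^{w(s)}e_2$ when $w(s)\in\mathbf{N}$, not in $Le_2$. In particular your claim ``$\ker = Le_1$ always'' is false --- as you yourself discover in case~3), where $e_2'\in\ker$. But the same issue is present in case~1) for $s\in\mathcal{S}_*^{\rm st}$: there too $w(s)\in\mathbf{N}^*$, so $X_2=0$ and $X=\ker$, and you must show $\dim_L\ker = 1$ rather than $2$. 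Your appeal to ``arguing as in Lemma~\ref{prec}'' does not work here: a lift $z\in\ker$ of $t^{w(s)}e_2$ would, after adjusting by a multiple of $e_1$, become a $(\varphi,\Gamma)$-eigenvector, but this does \emph{not} split the sequence $0\to\mathcal{R}(\delta_1)\to D_{\rm rig}\to\mathcal{R}(\delta_2)\to 0$ (indeed, $e_2'$ is exactly such an element in the crystalline case, and the sequence is not split). The correct obstruction is Lemma~\ref{propre}(b): in the semistable case all $(\varphi,\Gamma)$-eigenvectors lie in $\bigcup_k L^*t^ke_1$, so no such $z$ exists. You invoke part~(c) of that lemma in case~3) but never part~(b) where it is needed; without it, your argument does not separate $\mathcal{S}_*^{\rm st}$ from $\mathcal{S}_*^{\rm cris}$.

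A smaller point: your treatment of $w(s)=0$ is also incomplete. You argue that $(\nabla - w(\delta_1))z = ce_1$ is unsolvable \emph{inside} $\mathcal{R}(\delta_1)$, but $z$ lives in $D_{\rm rig}$, not in $\mathcal{R}e_1$; the non-solvability of $\nabla f = 1$ in $\mathcal{R}$ does not by itself rule out a solution in $D_{\rm rig}$. The paper handles $w(s)\le 0$ differently, by bounding $X\cap\mathcal{R}e_1$ and the image $p_s(X)$ separately (the latter sits in $\{fe_2:\nabla f = w(s)f\}$, which vanishes for $w(s)<0$).
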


\begin{proof} Commen\c{c}ons par remarquer que $e_1\in X$ et que $t^{-w(s)}e_1\in X$ (resp.
$e_2'\in X$) pour $s\in \{...,-2,-1\}$ (resp. $s\in\mathcal{S}_*^{\rm cris}$). Pour l'inclusion inverse, on va distinguer deux cas:

$\bullet$  Si $w(s)\notin\{0,-1,-2,...\}$, la proposition pr\'{e}c\'{e}dente montre que
 $(\nabla-w(\delta_1))z=0$ si $z \in X$, donc\footnote{Noter que $(\nabla-w(\delta_1))(fe_2)=(\nabla-w(s))f\cdot e_2$.}
 la triangulation de
 $D_{\rm rig}$ induit une suite exacte
 $$0\to Le_1\to X\to \{fe_2| \nabla(f)=w(s)f\}.$$ Si $w(s)\notin\mathbf{N}^*$, le lemme
 \ref{eqdiff} montre que le terme de droite de cette suite exacte est nul et $X=Le_1$.
 Si $w(s)\in\mathbf{N}^*$ (ce qui inclut les cas $s\in\mathcal{S}_{*}^{\rm cris}$ et
 $s\in \mathcal{S}_{*}^{\rm st}$), on obtient donc (par le lemme \ref{eqdiff}) une suite exacte
 $0\to Le_1\to X\to L\cdot t^{w(s)}$. Cela montre d\'{e}j\`{a} que $\dim_L (X)\leq 2$ et permet
 de conclure dans le cas $s\in \mathcal{S}_{*}^{\rm cris}$. Supposons que $s\in \mathcal{S}_{*}^{\rm st}$
 et montrons que cette suite n'est pas exacte \`{a} droite (et donc que $X=L\cdot e_1$). Si ce n'\'{e}tait pas le cas,
 on trouve comme dans la preuve du lemme \ref{prec} un $z\in X$ qui s'envoie sur $t^{w(s)}e_2$ et tel que $\varphi(z)=p^{w(s)}\delta_2(p)z$.
   Comme $\sigma_a(z)-a^{w(s)}\delta_2(a)z\in L\cdot e_1$
  et comme $\varphi$ commute \`{a} $\Gamma$, on obtient facilement que $\sigma_a(z)=
  a^{w(s)}\delta_2(a)$, donc $z$ est propre pour $\varphi$ et $\Gamma$, ce qui contredit
 le lemme \ref{propre}.

$\bullet$ Si $w(s)\in\{0,-1,-2,...\}$, le lemme \ref{prec} montre que
  $(\nabla-w(\delta_1))X\subset X_2\subset \mathcal{R}e_1$, donc l'image de $X$ dans
  $\mathcal{R}(\delta_2)$ est contenue dans $\{fe_2| \nabla f-w(s)f=0\}$.
  Le dernier espace est nul si $w(s)<0$ et de dimension $1$ si $w(s)=0$.
  Enfin, $$X\cap \mathcal{R}e_1=\{fe_1| \nabla(\nabla f)+w(s)\nabla f=0\}$$
  et ceci est de dimension $2$ (resp. $1$) si $w(s)<0$ (resp. $w(s)=0$), toujours
  d'apr\`{e}s le lemme \ref{eqdiff} (si $w(s)=0$, noter que la relation
  $\nabla(\nabla f)=0$ force $\nabla f\in L\cap t\cdot \mathcal{R}=\{0\}$ et donc $f\in L$). Ceci permet de conclure.

\end{proof}

\section{L'involution $w_D$}\label{invol}

  On d\'{e}crit l'involution
  $w_D$ sur $D_{\rm rig}^{\psi=0}$ dans le cas o\`{u} $V$ est trianguline
et on d\'{e}montre le th\'{e}or\`{e}me \ref{th2}. 

 \subsection{Le module $\mathcal{R}\boxtimes_{\delta}\p1$} \label{delta}

Soit $\delta: \qpet\to L^*$ un caract\`{e}re continu. Rappelons
     que $\mathcal{R}^{\psi=0}$ est libre de rang $1$ sur\footnote{ L'anneau $\mathcal{R}(\Gamma)$ est d\'{e}fini par
$\mathcal{R}(\Gamma)=\Lambda(\Gamma)\otimes_{\Lambda(\Gamma_2)} \mathcal{R}(\Gamma_2)$, o\`{u} $\Gamma_2=\chi^{-1}(1+p^2\zp)$,
$\Lambda(G)$ est l'alg\`{e}bre des mesures \`{a} valeurs dans $O_L$ sur le groupe de Lie $p$-adique $G$ et
$\mathcal{R}(\Gamma_2)$ est d\'{e}fini comme l'anneau $\mathcal{R}$, en rempla\c{c}ant la variable $T$ par
$\gamma-1$, pour n'importe quel g\'{e}n\'{e}rateur topologique $\gamma$ de $\Gamma_2$.}
 $\mathcal{R}(\Gamma)$, de base $1+T$ (cela d\'{e}coule
     de \cite[cor. V.1.13]{C5}).
   L'involution $i_{\delta}$ de
     $L[\Gamma]$ qui envoie $\sigma_a$ sur $\delta(a)\sigma_{\frac{1}{a}}$ se prolonge
     de mani\`{e}re unique en une involution $i_{\delta}$ de $\mathcal{R}(\Gamma)$ (\cite[lemme V.2.3]{C5}) et
     on d\'{e}finit $w_{\delta}(f\cdot (1+T))=i_{\delta}(f)\cdot (1+T)$ si $f\in\mathcal{R}(\Gamma)$. Cela fournit une involution continue sur
     $\mathcal{R}^{\psi=0}$ telle que pour toute distribution $\mu$ sur $\zpet$ on ait
     $$w_{\delta}\left(\int_{\zpet} (1+T)^x \mu\right)=w_{\delta}\left(\left(\int_{\zpet} \sigma_{x} \mu\right)\cdot (1+T)\right)=$$
     $$\int_{\zpet} \delta(x)\sigma_{\frac{1}{x}}\mu\cdot (1+T)=\int_{\zpet} \delta(x)
     (1+T)^{\frac{1}{x}}\mu.$$ 
Donc l'involution $w_{\delta}$ que l'on vient de d\'{e}finir co\"\i ncide avec celle d\'{e}finie dans
\ref{invdevis}.

   Soit ${\rm LA}(\p1(\delta))$ 
l'espace des fonctions
localement analytiques $\phi:\qp\to L$ telles que $x\to \delta(x)\phi\left(\frac{1}{x}\right)$
se prolonge en une fonction localement analytique, muni de l'action d\'{e}finie par 
(o\`{u} $g=\left(\begin{smallmatrix} a & b\\c & d\end{smallmatrix}\right)$)
  $$(g^{-1}\cdot \phi)(x)=\delta(cx+d)\phi\left(\frac{ax+b}{cx+d}\right).$$
Un exercice standard de la th\'{e}orie des induites paraboliques
identifie
${\rm Ind}_{B}^{G} (1, \delta)\otimes \delta^{-1}$ \`{a} ${\rm LA}(\p1(\delta))$
(en tant que ${\rm GL}_2(\qp)$-modules topologiques). 
L'application qui envoie $\mu$ sur $\left({\rm Res}_{\zp}(\mu),
{\rm Res}_{\zp}(w\cdot\mu)\right)$ compos\'{e}e avec l'isomorphisme d'Amice\footnote{ Cet isomorphisme identifie $\mathcal{R}^+$ \`{a} 
l'espace
des distributions sur $\zp$ via la transform\'{e}e d'Amice $\mu\to \sum_{n\geq 0} \int_{\zp} \binom{x}{n}\mu\cdot T^n$.} identifie 
le dual de ${\rm LA}(\p1(\delta))$ (comme espace vectoriel topologique) 
\`{a} $$\mathcal{R}^+\boxtimes_{\delta}\p1=\{(f_1,f_2)\in\mathcal{R}^+\times\mathcal{R}^+| {\rm Res}_{\zpet}(f_2)=
w_{\delta}({\rm Res}_{\zpet}(f_1))\}.$$ 
L'espace $\mathcal{R}^+\boxtimes_{\delta}\p1$ est ainsi muni d'une action de ${\rm GL}_2(\qp)$.
Cette action est donn\'{e}e par des formules explicites comme dans \cite[II.1]{C5} (voir aussi \cite[4.1]{C7}). Ces formules
permettent de prolonger l'action de ${\rm GL}_2(\qp)$ \`{a} $\mathcal{R}\boxtimes_{\delta}\p1$ (d\'{e}fini de la
m\^{e}me mani\`{e}re que $\mathcal{R}^+\boxtimes_{\delta}\p1$, en rempla\c{c}ant $\mathcal{R}^+$ par $\mathcal{R}$).

\subsection{D\'{e}vissage du module $D_{\rm rig}\boxtimes\p1$}

   On d\'{e}montre le th\'{e}or\`{e}me \ref{th2} de l'introduction et on l'utilise ensuite
pour d\'{e}visser le module $D_{\rm rig}\boxtimes\p1$. Rappelons que l'on suppose que $V$ est trianguline,
correspondant \`{a} un point $s=(\delta_1,\delta_2,\mathcal{L})$ de $\mathcal{S}_{\rm irr}$.
Rappelons aussi que $D_{\rm rig}$ est contenu dans $D_{\rm rig}\boxtimes\p1$ (voir le paragraphe \ref{LLpadique}).
Le r\'{e}sultat suivant est crucial pour la suite:

\begin{lemma}\label{coro}
 On a $w\cdot e_1\in (D_{\rm rig}\boxtimes\p1)^U$.
\end{lemma}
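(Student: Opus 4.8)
The statement $w\cdot e_1\in(D_{\rm rig}\boxtimes\p1)^U$ amounts to showing that $e_1\in D_{\rm rig}$ (viewed inside $D_{\rm rig}\boxtimes\p1$ via $z\mapsto(z,0)$ or, after applying $w$, as a section over the complement of $0$) is fixed by the upper unipotent $U=\left(\begin{smallmatrix}1&\qp\\0&1\end{smallmatrix}\right)$. By the infinitesimal criterion already in hand — Lemma~\ref{ut} combined with Proposition~\ref{annulation} — it suffices to prove that $w\cdot e_1$ lies in the finite-dimensional space $(D_{\rm rig}\boxtimes\p1)^{u^+=0}=\{(0,z)\mid z\in X\}$ identified in Proposition~\ref{inf}/\ref{annulation}. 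So the real content is: \emph{$w\cdot e_1=(0,z)$ for some $z\in X$}, i.e. for some $z$ killed by $(\nabla-w(\delta_1))(\nabla-w(\delta_2))$. But Proposition~\ref{prop2} tells us exactly what $X$ is, and in every case $e_1\in X$ and moreover $X\cap\mathcal{R}e_1\supseteq Le_1$; so I expect $w\cdot e_1=(0,e_1)$ up to a scalar, which is to say $w_D({\rm Res}_{\zpet}(e_1))={\rm Res}_{\zpet}(e_1)$ up to the sign $\delta_1(-1)$ predicted by Theorem~\ref{th2}.

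\textbf{First step.} I would compute $u^+(w\cdot e_1)$ directly from Proposition~\ref{inf}. Writing $e_1=(e_1,0)\in D_{\rm rig}\boxtimes\p1$ (legitimate since ${\rm Res}_{\zpet}(0)=0=w_{D}({\rm Res}_{\zpet}(e_1))$ would need checking — actually the cleaner route is to use that $e_1\in\varphi(D_{\rm rig})$, exactly as the excerpt does for elements of $X$, so that $(0,e_1)\in D_{\rm rig}\boxtimes\p1$ and $w\cdot(0,e_1)=(e_1,0)$ after suitably interpreting $w$). Since $\nabla e_1=w(\delta_1)e_1$, we have $(\nabla-w(\delta_1))(\nabla-w(\delta_2))e_1=0$, so $e_1\in X$; hence $(0,e_1)\in D_{\rm rig}\boxtimes\p1$ and, by Proposition~\ref{inf} applied to $z=(0,e_1)$, $u^+((0,e_1))=\bigl(0,-\tfrac{(\nabla-a)(\nabla-b)e_1}{t}\bigr)=(0,0)$. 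Thus $(0,e_1)\in(D_{\rm rig}\boxtimes\p1)^{u^+=0}$.

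\textbf{Second step.} Now I transport this across $w$. The element $w\cdot(0,e_1)$ is $(e_1,\ast)$ where the second coordinate is determined by the gluing condition ${\rm Res}_{\zpet}(\ast)=w_D({\rm Res}_{\zpet}(e_1))$; the claim $w\cdot e_1\in(D_{\rm rig}\boxtimes\p1)^U$ is literally the statement that $(e_1,\ast)$ — call it $w\cdot e_1$ — is $U$-invariant. Since $w$ conjugates $U$ to the lower unipotent $\overline U$, $u^+(w\cdot e_1)=w\cdot u^-((0,e_1))$, and I need $u^-((0,e_1))=0$. Here I would invoke \cite[th.~1]{D1} in the form already used in the proof of Proposition~\ref{inf}: $u^-$ acts on the ``$D_{\rm rig}$-part'' essentially by multiplication by $t$ up to the differential operator, and on $e_1$ — a $\nabla$-eigenvector with eigenvalue $w(\delta_1)$ which is one of the Hodge–Tate–Sen weights — the relevant operator $(\nabla-a)(\nabla-b)$ annihilates it. Concretely: $u^-$ applied to a vector in $D_{\rm rig}\subset D_{\rm rig}\boxtimes\p1$ produces something proportional to $\tfrac{(\nabla-a)(\nabla-b)}{t}$ of that vector, which vanishes on $e_1$. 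So $u^-((0,e_1))=0$, hence $u^+(w\cdot e_1)=0$, i.e. $w\cdot e_1\in(D_{\rm rig}\boxtimes\p1)^{u^+=0}$.

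\textbf{Conclusion and the obstacle.} Finally, $(D_{\rm rig}\boxtimes\p1)^{u^+=0}$ is finite-dimensional over $L$ (Proposition~\ref{annulation}, it has dimension $=\dim_L X\le 4$), and it is a representation of $\left(\begin{smallmatrix}p^{\mathbf Z}&\qp\\0&1\end{smallmatrix}\right)$; Lemma~\ref{ut} then upgrades $u^+=0$ to genuine $U$-invariance, giving $w\cdot e_1\in(D_{\rm rig}\boxtimes\p1)^U$. The main subtlety I anticipate is bookkeeping around the $w$-twist: one must be careful that ``$w\cdot e_1$'' is a well-defined element of $D_{\rm rig}\boxtimes\p1$ (which requires knowing ${\rm Res}_{\zpet}(e_1)$ lies in the image of $\psi$, or equivalently that $e_1\in\varphi(D_{\rm rig})$, true because $\varphi(e_1)=\delta_1(p)e_1$), and that the identity $u^+\circ w=w\circ u^-$ is applied with the correct normalization of the $\mathfrak{gl}_2$-action — the same normalization under which Proposition~\ref{inf} was proved. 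Once those conventions are pinned down, no further computation is needed: the argument is a one-line consequence of Proposition~\ref{inf}, \cite[th.~1]{D1}, and Lemma~\ref{ut}.
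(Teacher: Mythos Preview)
Your Plan and First step already contain the paper's entire proof. Since $\nabla e_1 = w(\delta_1)e_1$, one has $e_1\in X$; the proof of Proposition~\ref{annulation} establishes $(D_{\rm rig}\boxtimes\p1)^U=\{(0,z)\mid z\in X\}$ (via Proposition~\ref{inf} and Lemma~\ref{ut}), and its last line records $(0,z)=w\cdot z$ for $z\in X$. Hence $w\cdot e_1=(0,e_1)\in(D_{\rm rig}\boxtimes\p1)^U$. The paper's proof is literally this one sentence, and it coincides with your Plan plus First step plus Conclusion.

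Your Second step, however, is both superfluous and wrong, and you should delete it. There you switch to asserting $w\cdot e_1=(e_1,\ast)$, contradicting the identity $w\cdot e_1=(0,e_1)$ you correctly stated in the Plan (and which follows from $\varphi(e_1)=\delta_1(p)e_1$, hence ${\rm Res}_{\zpet}(e_1)=(1-\varphi\psi)(e_1)=0$, so $e_1$ embeds as $(e_1,0)$ and $w$ swaps the two coordinates). Your claim that $u^-((0,e_1))=0$ is also false: since $(0,e_1)=w\cdot e_1$ and $w u^- w=u^+$, one has
\[
u^-\big((0,e_1)\big)=w\cdot u^+(e_1)=w\cdot(te_1,0)=(0,te_1)\neq 0
\]
by Proposition~\ref{inf}. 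The only ``bookkeeping around the $w$-twist'' that is needed is the one-line identification $w\cdot e_1=(0,e_1)$; no computation with $u^-$ is required.
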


\begin{proof}
 
  C'est une cons\'{e}quence de la proposition \ref{annulation} et du fait que 
$w\cdot e_1=(0,e_1)$ et $e_1\in X$.
\end{proof}

  \begin{proposition}\label{inv}
   Pour tout $f\in\mathcal{R}^{\psi=0}$ on a $w_{D}(f\cdot e_1)=\delta_1(-1)w_{\delta_D\delta_1^{-2}}(f)\cdot e_1$.
  \end{proposition}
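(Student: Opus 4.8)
The plan is to exploit the fact, established in Lemma \ref{coro}, that $w\cdot e_1 = (0,e_1)$ lies in $(D_{\rm rig}\boxtimes\p1)^U$, and more generally that $e_1\in X$, to transport the computation of $w_{D}$ on $\mathcal{R}^{\psi=0}\cdot e_1$ to the known computation of the involution $w_{\delta}$ on $\mathcal{R}^{\psi=0}$. First I would recall that $w_{D}$ describes the action of $w=\left(\begin{smallmatrix} 0 & 1 \\ 1 & 0\end{smallmatrix}\right)$ on $D_{\rm rig}^{\psi=0}$, and that $f\cdot e_1$ for $f\in\mathcal{R}^{\psi=0}$ represents, inside $D_{\rm rig}\boxtimes\p1$, a section that can be written using the embedding of $\mathcal{R}(\delta_1)^{\psi=0}$. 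The key structural input is that the sub-$(\varphi,\Gamma)$-module $\mathcal{R}(\delta_1)\subset D_{\rm rig}$ is $w$-stable once we pass to $\p1$: indeed $(0, f e_1)\in D_{\rm rig}\boxtimes\p1$ for all $f$ because $\mathcal{R}(\delta_1)^{\psi=0}$-sections extend over $\p1$ (this is essentially the content of $e_1\in X\subset \varphi(D_{\rm rig})$ together with the $\Gamma$-equivariance), so that the pair $(\mathcal{R}(\delta_1)\boxtimes_{\delta_D\delta_1^{-2}}\p1)\otimes\delta_1$ is a $\mathrm{GL}_2(\qp)$-submodule of $D_{\rm rig}\boxtimes\p1$, as in Corollary \ref{devi}.

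The core of the argument is then a twisting computation. The canonical basis vector $e_1$ satisfies $\varphi(e_1)=\delta_1(p)e_1$ and $\sigma_a(e_1)=\delta_1(a)e_1$, so the map $f\mapsto f\cdot e_1$ intertwines the $(\varphi,\Gamma)$-action on $\mathcal{R}$ twisted by $\delta_1$ with the action on $\mathcal{R}(\delta_1)\subset D_{\rm rig}$. The involution $w_{D}$ on $D_{\rm rig}^{\psi=0}$ is characterized (up to the details recalled in the footnote) by its compatibility with the $\mathrm{GL}_2(\qp)$-action; concretely, for $g$ in the relevant subgroups one has compatibility relations between $w_D$ and $\sigma_a$, namely $w_D(\sigma_a z)=\delta_D(a)\sigma_{1/a}(w_D(z))$ — this is exactly the defining relation of the abstract involutions $w_\delta$ on $\mathcal{R}^{\psi=0}$ recalled in \ref{invdevis} and \ref{delta}. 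So I would argue: the operator $f\mapsto \delta_1(-1)^{-1}e_1^{-1}\cdot w_D(f\cdot e_1)$ (reading $e_1^{-1}\cdot$ as the inverse of $f\mapsto fe_1$ on the line $\mathcal{R}(\delta_1)^{\psi=0}$) is an involution of $\mathcal{R}^{\psi=0}$ satisfying the twisted relation $g(\sigma_a g)=\delta_D\delta_1^{-2}(a)\sigma_{1/a}(g(g))$, because the $\delta_1(a)$ coming from $e_1$ appears twice — once from $\sigma_a(e_1)$ on the input side and once from $\sigma_{1/a}(e_1)$ on the output side — contributing $\delta_1(a)^{-2}$, while $\delta_D(a)$ comes from the intrinsic relation for $w_D$. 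By the uniqueness statement for involutions satisfying such a relation (the extension result quoted at the end of \ref{invdevis}, resting on \cite[lemme V.2.3]{C5}), this operator must coincide with $w_{\delta_D\delta_1^{-2}}$, possibly up to normalization; the sign $\delta_1(-1)$ and the normalization are pinned down by evaluating on a single convenient element, e.g. $f=1+T$ or by matching constant terms / the behavior under $\mathrm{Res}_{\zpet}$.

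I expect the main obstacle to be making rigorous the passage from "$w_D$ restricted to $\mathcal{R}(\delta_1)^{\psi=0}$ is an $\mathcal{R}(\Gamma)$-semilinear involution satisfying the twisted relation" to "it is therefore $w_{\delta_D\delta_1^{-2}}$", i.e. checking that the uniqueness of such involutions genuinely applies after twisting by $e_1$, and verifying the normalization constant $\delta_1(-1)$. This requires knowing that $w_D$ preserves the submodule $\mathcal{R}(\delta_1)\boxtimes\p1$ compatibly with its whole $\mathrm{GL}_2(\qp)$-structure — which is where Lemma \ref{coro} and the fact $e_1\in X$ (hence $(0,e_1)\in\pi^*\otimes\delta_D$, so $w\cdot e_1$ is a genuine global section fixed by $U$) enter decisively — and then unwinding the $\mathrm{GL}_2(\qp)$-action formulas recalled in \ref{delta} to see that the restriction really is $w_{\delta_D\delta_1^{-2}}$ on the $\mathcal{R}$-coordinate. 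The sign is a short check: apply $\sigma_{-1}$, which acts by $\delta_1(-1)$ on $e_1$ and note that $w^2=\mathrm{id}$ forces the normalization to be a square root of $\delta_1(-1)^2=1$ compatible with $\delta_1(-1)$; alternatively one reads it off from the explicit formula for the action of $\left(\begin{smallmatrix} 0 & 1 \\ 1 & 0\end{smallmatrix}\right)$ on $\mathcal{R}\boxtimes_\delta\p1$. Everything else is routine bookkeeping with the Amice transform and the semilinearity relations.
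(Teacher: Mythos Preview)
Your overall architecture matches the paper's: reduce to a semilinearity statement over $\mathcal{R}(\Gamma)$, and then pin everything down by a single evaluation on the generator $1+T$. The paper also uses Lemma~\ref{coro} as the crucial input. So the skeleton is right.

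The gap is precisely at the step you yourself flag as ``the main obstacle''. You need to know, before any uniqueness or semilinearity argument can run, that $w_D((1+T)e_1)$ lies in $\mathcal{R}e_1$ (and in fact equals $\delta_1(-1)(1+T)e_1$). You propose to get this by ``unwinding the $\mathrm{GL}_2(\qp)$-action formulas recalled in \ref{delta}'' and by appealing to Corollary~\ref{devi}. But the formulas in \ref{delta} describe the action on $\mathcal{R}\boxtimes_\delta\p1$, not on $D_{\rm rig}\boxtimes\p1$; on the latter the action of $w$ is \emph{defined} via $w_D$, which is exactly the unknown. And Corollary~\ref{devi} is, in the paper, a \emph{consequence} of Proposition~\ref{inv} (and~\ref{inv2}), so invoking it here is circular. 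Likewise, your operator $f\mapsto \delta_1(-1)^{-1}e_1^{-1}\cdot w_D(f\cdot e_1)$ is not defined until you already know $w_D(f e_1)\in\mathcal{R}e_1$, so the uniqueness lemma for involutions on $\mathcal{R}^{\psi=0}$ cannot be applied to it a priori.

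What the paper does at this point is a concrete group-theoretic trick that you are missing: one checks the matrix identity
\[
\left(\begin{smallmatrix} 1 & 1\\0 & 1\end{smallmatrix}\right)w
\left(\begin{smallmatrix} -1 & 0\\0 & -1\end{smallmatrix}\right)
\left(\begin{smallmatrix} -1 & 0 \\0 & 1\end{smallmatrix}\right)
\left(\begin{smallmatrix} 1 & 1 \\0 & 1\end{smallmatrix}\right)w
=w\left(\begin{smallmatrix} 1 & 1 \\0 & 1\end{smallmatrix}\right)
\]
and applies both sides to $e_1\in D_{\rm rig}\boxtimes\p1$. On the left, Lemma~\ref{coro} kills the unipotent acting on $w\cdot e_1$, the central element contributes $\delta_D(-1)$, and $\sigma_{-1}$ contributes $\delta_1(-1)$; altogether the left side is $\delta_1(-1)(1+T)e_1$. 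The right side is $w\cdot((1+T)e_1)$, whose $\zp$-component is $w_D((1+T)e_1)$. This \emph{simultaneously} proves that $w_D((1+T)e_1)\in\mathcal{R}e_1$ and computes the constant $\delta_1(-1)$; only then does the semilinearity argument (both $F(f)=w_D(fe_1)$ and $G(f)=\delta_1(-1)w_{\delta_D\delta_1^{-2}}(f)e_1$ are $i_{\delta_D\delta_1^{-1}}$-semilinear and agree on the $\mathcal{R}(\Gamma)$-basis $1+T$) finish the proof. Your sketch needs this identity, or an equivalent device, to close the gap.
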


  \begin{proof} On laisse au lecteur le soin de
 v\'{e}rifier l'identit\'{e} suivante (dans laquelle $w=\left(\begin{smallmatrix} 0 & 1 \\1 & 0\end{smallmatrix}\right)$)
  $$ \left(\begin{smallmatrix} 1 & 1\\0 & 1\end{smallmatrix}\right)w\cdot\left(\begin{smallmatrix} -1 & 0\\0 & -1\end{smallmatrix}\right)\cdot
  \left(\begin{smallmatrix} -1 & 0 \\0 & 1\end{smallmatrix}\right)\cdot
  \left(\begin{smallmatrix} 1 & 1 \\0 & 1\end{smallmatrix}\right)\cdot w=
 w\cdot
  \left(\begin{smallmatrix} 1 & 1 \\0 & 1\end{smallmatrix}\right).
  $$

   Appliquons cette identit\'{e} \`{a} $e_1\in D_{\rm rig}\boxtimes
  \p1$. Le terme de gauche est \'{e}gal\footnote{Utiliser le fait que $w\cdot e_1=(0,e_1)$, le lemme
  \ref{coro}, l'\'{e}galit\'{e} $\sigma_{-1}(e_1)=\delta_1(-1)e_1$ et enfin le fait que
  $D_{\rm rig}\boxtimes \p1$ a pour caract\`{e}re central $\delta_D$.}
  \`{a} $\delta_1(-1)(1+T)e_1$, ce qui permet donc d'\'{e}crire
  $w_D((1+T)e_1)=\delta_1(-1)(1+T)e_1$. Comme $\sigma_a(e_1)=\delta_1(a)e_1$
et comme $w_D(\sigma_a(z))=\delta_D(a)\sigma_{\frac{1}{a}}(w_D(z))$,
il est facile de voir que les applications $F(f)=w_D(fe_1)$ et
$G(f)=\delta_1(-1)w_{\delta_D\cdot\delta_1^{-2}}(f)\cdot e_1$
sont semi-lin\'{e}aires pour l'action de $i_{\delta_D\cdot \delta_1^{-1}}$.
Comme elles co\"\i ncident sur $1+T$, qui est une base de $\mathcal{R}^{\psi=0}$ sur
$\mathcal{R}(\Gamma)$, on obtient bien $F=G$, d'o\`{u} le r\'{e}sultat.

 \end{proof}

   Soit $\mathcal{R}e_1\boxtimes\p1=(D_{\rm rig}\boxtimes\p1)\cap (\mathcal{R}e_1\times\mathcal{R}e_1)$.

  \begin{corollary} $\mathcal{R}e_1\boxtimes\p1$ est un sous-module ferm\'{e} de $D_{\rm rig}\boxtimes \p1$, stable sous
   l'action de ${\rm GL}_2(\qp)$. 
  \end{corollary}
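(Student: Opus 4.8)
The plan is to verify the three claimed properties of $\mathcal{R}e_1\boxtimes\p1$ separately, using the explicit description $\mathcal{R}e_1\boxtimes\p1=(D_{\rm rig}\boxtimes\p1)\cap(\mathcal{R}e_1\times\mathcal{R}e_1)$ together with Proposition \ref{inv}. First, closedness: $\mathcal{R}e_1$ is a closed submodule of $D_{\rm rig}$ (it is the image of the closed embedding $\mathcal{R}(\delta_1)\hookrightarrow D_{\rm rig}$), hence $\mathcal{R}e_1\times\mathcal{R}e_1$ is closed in $D_{\rm rig}\times D_{\rm rig}$, and intersecting with the closed subspace $D_{\rm rig}\boxtimes\p1$ keeps it closed.

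Next I would identify $\mathcal{R}e_1\boxtimes\p1$ intrinsically as $\mathcal{R}e_1\boxtimes_{\delta_D\cdot\delta_1^{-2}}\p1$ sitting inside $D_{\rm rig}\boxtimes\p1$ via $f\mapsto fe_1$. Indeed, a pair $(z_1,z_2)$ with $z_i=f_ie_1$, $f_i\in\mathcal{R}$, lies in $D_{\rm rig}\boxtimes\p1$ iff $w_D({\rm Res}_{\zpet}(f_1e_1))={\rm Res}_{\zpet}(f_2e_1)$; since ${\rm Res}_{\zpet}=1-\varphi\psi$ is $\mathcal{R}(\delta_1)$-linear it sends $f_ie_1$ to $({\rm Res}_{\zpet}f_i)e_1$, and then Proposition \ref{inv} rewrites the condition as $\delta_1(-1)w_{\delta_D\delta_1^{-2}}({\rm Res}_{\zpet}f_1)={\rm Res}_{\zpet}f_2$, i.e. (absorbing the harmless constant $\delta_1(-1)$ into the identification, or noting $w_\delta$ is defined up to such a twist) exactly the defining relation of $\mathcal{R}\boxtimes_{\delta_D\delta_1^{-2}}\p1$. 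So $\mathcal{R}e_1\boxtimes\p1\cong(\mathcal{R}\boxtimes_{\delta_D\delta_1^{-2}}\p1)\otimes\delta_1$ as topological $L$-vector spaces.

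For ${\rm GL}_2(\qp)$-stability I would argue directly from the structure of the $\p1$-sheaf rather than from the abstract identification. The global sections $D_{\rm rig}\boxtimes\p1$ carry the ${\rm GL}_2(\qp)$-action built from $\varphi,\psi,\Gamma$ and $w_D$; all of $\varphi,\psi$ and the $\sigma_a$ preserve $\mathcal{R}(\delta_1)=\mathcal{R}e_1$ (they are $\mathcal{R}$-semilinear and fix the line $Le_1$ up to scalars), and Proposition \ref{inv} says $w_D$ preserves $(\mathcal{R}e_1)^{\psi=0}$ inside $D_{\rm rig}^{\psi=0}$. Since the ${\rm GL}_2(\qp)$-action on global sections is generated by the ``Borel'' action (through $\varphi,\psi,\Gamma$, and the mirabolic), together with the single extra element $w=\left(\begin{smallmatrix}0&1\\1&0\end{smallmatrix}\right)$ whose action on the $\zpet$-component is governed by $w_D$, every generator maps $\mathcal{R}e_1\boxtimes\p1$ into itself; hence so does the whole group. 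Concretely, it suffices to check stability under the mirabolic subgroup $\left(\begin{smallmatrix}\qpet&\qp\\0&1\end{smallmatrix}\right)$ and under $w$: the former because $\mathcal{R}e_1$ is stable under $\varphi,\psi,\Gamma$ and under multiplication by $(1+T)^a$ (the latter lands in $\mathcal{R}e_1$), the latter because $w\cdot(z_1,z_2)=(z_2,z_1)$ on the level of global sections with the compatibility $w_D$ already encoded, so it swaps the two coordinates and preserves the property of both lying in $\mathcal{R}e_1$.

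The main obstacle is bookkeeping with the twist: one must be careful that the constant $\delta_1(-1)$ appearing in Proposition \ref{inv} and the central character $\delta_D$ are correctly absorbed, and that $\mathcal{R}e_1\boxtimes\p1$ is genuinely the full intersection (i.e. that the condition $z_i\in\mathcal{R}e_1$ is compatible with ${\rm Res}_{\zpet}$ and $w_D$, which is exactly what Proposition \ref{inv} guarantees). Once the identification with $(\mathcal{R}\boxtimes_{\delta_D\delta_1^{-2}}\p1)\otimes\delta_1$ is in hand, stability is automatic from the functoriality of the $\boxtimes\p1$-construction, and this is precisely the first arrow in Corollary \ref{devi}; the remaining content of that corollary (exactness on the right, identification of the quotient with $(\mathcal{R}\boxtimes_{\delta_D\delta_2^{-2}}\p1)\otimes\delta_2$) will be handled afterwards using the second formula of Theorem \ref{th2}.
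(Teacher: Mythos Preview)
Your proof is correct and follows essentially the same approach as the paper: closedness from the fact that $\mathcal{R}e_1$ is closed in $D_{\rm rig}$, and ${\rm GL}_2(\qp)$-stability from Proposition~\ref{inv} (which shows $w_D$ preserves $(\mathcal{R}e_1)^{\psi=0}$) together with the stability of $\mathcal{R}e_1$ under $\varphi$, $\psi$, $\Gamma$ and the explicit formulas for the group action. Your middle paragraph identifying $\mathcal{R}e_1\boxtimes\p1$ with $(\mathcal{R}\boxtimes_{\delta_D\delta_1^{-2}}\p1)\otimes\delta_1$ is not needed for this corollary and in fact anticipates Corollary~\ref{dev}; note also that the paper handles the constant $\delta_1(-1)$ there by building it into the map $i:(f_1,f_2)\mapsto(f_1e_1,\delta_1(-1)f_2e_1)$ rather than ``absorbing'' it as you suggest.
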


\begin{proof}
  En tant qu'espace vectoriel topologique, $\mathcal{R}e_1\boxtimes\p1$ (resp. $D_{\rm rig}\boxtimes\p1$) s'identifie 
\`{a} $\mathcal{R}e_1\times\mathcal{R}e_1$ (resp. $D_{\rm rig}\times D_{\rm rig}$). La fermeture de $\mathcal{R}e_1\boxtimes\p1$
dans $D_{\rm rig}\boxtimes \p1$ suit donc de celle de $\mathcal{R}e_1$ dans $D_{\rm rig}$. La stabilit\'{e} de $D_{\rm rig}\boxtimes \p1$
sous l'action de ${\rm GL}_2(\qp)$ d\'{e}coule de la proposition \ref{inv}, de la stabilit\'{e} de $\mathcal{R}e_1$ par $\varphi$ et 
$\Gamma$, et des formules donnant
 l'action de ${\rm GL}_2(\qp)$ sur $D_{\rm rig}\boxtimes\p1$.

\end{proof}

  \begin{proposition}\label{inv2}

 Pour tout $B\in \mathcal{R}^{\psi=0}$ on a $$p_s(w_D(B\cdot \varphi(\hat{e}_2)))=\delta_2(-1)w_{\delta_D\cdot\delta_2^{-2}}(B)\varphi(e_2).$$
   
 \end{proposition}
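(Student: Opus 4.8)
The strategy mirrors exactly the proof of Proposition \ref{inv}, replacing the eigenvector $e_1$ by the lift $\varphi(\hat{e}_2)$ of $\varphi(e_2)$; the extra subtlety is that $\hat{e}_2$ is not an eigenvector for $\Gamma$, so one must work modulo $\mathcal{R}e_1\boxtimes\p1$ throughout and invoke the previous corollary. First I would establish the analogue of Lemma \ref{coro}, namely that the class of $w\cdot\varphi(\hat{e}_2)=(0,\varphi(\hat{e}_2))$ lies in $(D_{\rm rig}\boxtimes\p1)^U$ \emph{modulo} $\mathcal{R}e_1\boxtimes\p1$. Concretely, $u^+(0,\varphi(\hat{e}_2))=\bigl(0,-\tfrac{1}{t}(\nabla-a)(\nabla-b)\varphi(\hat{e}_2)\bigr)$ by Proposition \ref{inf} (with $a=w(\delta_1)$, $b=w(\delta_2)$); since $p_s(\hat{e}_2)=e_2$ and $\nabla e_2=w(\delta_2)e_2$, we get $p_s\bigl((\nabla-w(\delta_2))\varphi(\hat{e}_2)\bigr)=0$, so $(\nabla-w(\delta_2))\varphi(\hat{e}_2)\in\mathcal{R}e_1$, and then $(\nabla-w(\delta_1))(\nabla-w(\delta_2))\varphi(\hat{e}_2)\in(\nabla-w(\delta_1))\mathcal{R}e_1\subset t\mathcal{R}e_1$ (because $(\nabla-w(\delta_1))(fe_1)=t(1+T)f'\cdot e_1\in t\mathcal{R}e_1$), so dividing by $t$ keeps us in $\mathcal{R}e_1$. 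Hence $u^+(0,\varphi(\hat{e}_2))\in\{0\}\times\mathcal{R}e_1\subset\mathcal{R}e_1\boxtimes\p1$, and Lemma \ref{ut} upgrades the infinitesimal invariance to genuine $U$-invariance modulo the submodule $\mathcal{R}e_1\boxtimes\p1$.

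Next I would feed the group-theoretic identity from the proof of Proposition \ref{inv}, namely
$$
\left(\begin{smallmatrix}1&1\\0&1\end{smallmatrix}\right)w\left(\begin{smallmatrix}-1&0\\0&-1\end{smallmatrix}\right)\left(\begin{smallmatrix}-1&0\\0&1\end{smallmatrix}\right)\left(\begin{smallmatrix}1&1\\0&1\end{smallmatrix}\right)w=w\left(\begin{smallmatrix}1&1\\0&1\end{smallmatrix}\right),
$$
into $\varphi(\hat{e}_2)$ viewed inside $D_{\rm rig}\boxtimes\p1$, and compute the left-hand side modulo $\mathcal{R}e_1\boxtimes\p1$. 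Using $w\cdot\varphi(\hat{e}_2)=(0,\varphi(\hat{e}_2))$, the $U$-invariance established in the previous step, the fact that $\sigma_{-1}$ acts on $\varphi(\hat{e}_2)$ by $\delta_2(-1)$ modulo $\mathcal{R}e_1$ (since $p_s\sigma_{-1}\varphi(\hat{e}_2)=\delta_2(-1)\varphi(e_2)$), and the central character $\delta_D$ of $D_{\rm rig}\boxtimes\p1$, the left-hand side collapses to $\delta_2(-1)(1+T)\varphi(\hat{e}_2)$ modulo $\mathcal{R}e_1\boxtimes\p1$. The right-hand side is $(1+T)\cdot(w\cdot\varphi(\hat{e}_2))$, whose restriction to $\zpet$ encodes $w_D$; applying $p_s$ (which kills $\mathcal{R}e_1$) yields $p_s(w_D((1+T)\varphi(\hat{e}_2)))=\delta_2(-1)(1+T)\varphi(e_2)$, i.e. the desired identity for $B=1+T$.

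Finally I would bootstrap from $B=1+T$ to arbitrary $B\in\mathcal{R}^{\psi=0}$ by semilinearity, exactly as in Proposition \ref{inv}. Both maps $B\mapsto p_s(w_D(B\cdot\varphi(\hat{e}_2)))$ and $B\mapsto\delta_2(-1)w_{\delta_D\cdot\delta_2^{-2}}(B)\varphi(e_2)$ are semilinear over $i_{\delta_D\cdot\delta_2^{-1}}$ acting on $\mathcal{R}(\Gamma)$: on the source side this uses the relation $w_D(\sigma_a z)=\delta_D(a)\sigma_{1/a}(w_D(z))$ together with $p_s(\sigma_a\varphi(\hat{e}_2))=\delta_2(a)\varphi(e_2)$ and $p_s\circ w_D(\mathcal{R}e_1\cdot(\text{anything}))$ being controlled by Proposition \ref{inv} (one checks the error terms land in $p_s(\mathcal{R}e_1)=0$); on the target side it is the defining relation $w_\delta(\sigma_a f)=\delta(a)\sigma_{1/a}(w_\delta(f))$ for $\delta=\delta_D\cdot\delta_2^{-2}$. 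Since $1+T$ generates $\mathcal{R}^{\psi=0}$ as a module over $\mathcal{R}(\Gamma)$ and the two semilinear maps agree there, they agree everywhere. \textbf{The main obstacle} is the bookkeeping in the first step: one must be careful that "$U$-invariance modulo $\mathcal{R}e_1\boxtimes\p1$" is exactly the right statement to make the group identity produce a clean formula after applying $p_s$, and that the $\Gamma$-semilinearity in the last step is not spoiled by the non-equivariance of $\hat{e}_2$ — both are handled by systematically projecting via $p_s$ and quoting the already-proved Proposition \ref{inv} for the $\mathcal{R}e_1$-part.
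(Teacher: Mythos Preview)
Your plan is essentially identical to the paper's: reduce to $B=1+T$ by $\mathcal{R}(\Gamma)$-semilinearity, establish $U$-invariance of $w\cdot\varphi(\hat{e}_2)$ in the quotient $Y=(D_{\rm rig}\boxtimes\p1)/(\mathcal{R}e_1\boxtimes\p1)$, and then apply the same matrix identity as in Proposition~\ref{inv}. The computations you sketch for $u^+(0,\varphi(\hat{e}_2))\in\mathcal{R}e_1\boxtimes\p1$ and for the eigenvalue under the diagonal torus are correct.

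There is, however, one genuine omission. To invoke Lemma~\ref{ut} on the quotient $Y$, you need its hypothesis: that $Y^{u^+=0}$ is finite-dimensional over $L$. You assert the upgrade from $u^+$-invariance to $U$-invariance without checking this, and it is not automatic --- infinitesimal invariance does not in general imply invariance for the full unipotent. The paper proves this explicitly as part of its Lemma~\ref{involagain}: if $[z]\in Y^{u^+=0}$ then Proposition~\ref{inf} forces $z_1\in\mathcal{R}e_1$ and $p_s(z_2)\in W=\{f\in\mathcal{R}(\delta_2)\mid(\nabla-w(\delta_1))(\nabla-w(\delta_2))f=0\}$, and then $[z]$ is determined by $p_s({\rm Res}_{p\zp}(z_2))$, which lives in the finite-dimensional space $W$ (finiteness coming from Proposition~\ref{fini}). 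You should insert this argument; otherwise your appeal to Lemma~\ref{ut} is unjustified.
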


  \begin{proof} Un argument de semi-lin\'{e}arit\'{e} comme dans la preuve de la proposition
  \ref{inv} montre que l'on peut supposer que $B=1+T$.

   Posons $Y=(D_{\rm rig}\boxtimes \p1)/(\mathcal{R}e_1\boxtimes \p1)$ et notons $z\to [z]$ la projection canonique
 $D_{\rm rig}\boxtimes\p1\to Y$. Nous aurons besoin du r\'{e}sultat suivant, analogue du lemme \ref{coro}.

 \begin{lemma}\label{involagain}
   L'\'{e}l\'{e}ment $[w\cdot \varphi(\hat{e}_2)]$ de $Y$ est invariant par $U$.
  \end{lemma}

  \begin{proof}
   Le lemme \ref{ut} montre qu'il suffit
de v\'{e}rifier que $\dim_L Y^{u^+=0}<\infty$ et que
   $[w\cdot \varphi(\hat{e}_2)]\in Y^{u^+=0}$.
    Le $L$-espace vectoriel $$W=\{f\in\mathcal{R}(\delta_2)|
    (\nabla-w(\delta_1))(\nabla-w(\delta_2))f=0\}$$ est de dimension finie d'apr\`{e}s la
    proposition \ref{fini}. Si $z=(z_1,z_2)\in D_{\rm rig}\boxtimes \p1$ satisfait $[z]\in Y^{u^+=0}$, alors $u^+z\in\mathcal{R}e_1\boxtimes \p1$
et on d\'{e}duit de la proposition \ref{inf}
    que $z_1\in \mathcal{R}e_1$ et que $p_s(z_2)\in W$. Donc
     $$z=z_1+w\cdot {\rm Res}_{p\zp}(z_2)\equiv
    w\cdot {\rm Res}_{p\zp}(p_s(z_2))\pmod {\mathcal{R}e_1\boxtimes\p1},$$
     ce qui montre que $\dim_L Y^{u^+=0}<\infty$.

      Pour conclure, il nous reste \`{a} v\'{e}rifier que $u^+(w\cdot \varphi(\hat{e}_2))\in \mathcal{R}e_1\boxtimes \p1$.
      Cela d\'{e}coule de la proposition \ref{inf} et du fait que $(\nabla-w(\delta_2))\hat{e}_2\in \mathcal{R}e_1$ (car
      $\sigma_a(\hat{e}_2)-\delta_2(a)\hat{e}_2\in \mathcal{R}e_1$ pour tout $a\in \zpet$).

  \end{proof}

       Revenons \`{a} la preuve de la proposition \ref{inv2}. 
On applique l'identit\'{e} matricielle du d\'{e}but de la preuve de la proposition \ref{inv} \`{a} $[w\cdot \varphi(\hat{e}_2)]$.
       Noter que $[w\cdot \varphi(\hat{e}_2)]$ est vecteur propre pour l'op\'{e}rateur $\left(\begin{smallmatrix} -1 & 0 \\0 & 1\end{smallmatrix}\right)$,
     de valeur propre $\delta_2\cdot \delta_D(-1)$. L'identit\'{e} matricielle s'\'{e}crit donc $p_s(w_D((1+T)\varphi(\hat{e}_2)))=\delta_2(-1)(1+T)\varphi(e_2)$, ce qui permet
     de conclure.

  \end{proof}

 \begin{corollary}\label{dev}
   La suite exacte $0\to\mathcal{R}e_1\to D_{\rm rig}\to \mathcal{R}e_2\to 0$ induit une suite exacte
de ${\rm GL}_2(\qp)$-modules topologiques
    $$0\to (\mathcal{R}\boxtimes_{\delta_D\cdot\delta_1^{-2}}\p1)\otimes\delta_1\to D_{\rm rig}\boxtimes\p1\to
 (\mathcal{R}\boxtimes_{\delta_D\cdot
    \delta_2^{-2}}\p1)\otimes\delta_2\to 0.$$
  \end{corollary}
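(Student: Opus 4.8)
The plan is to deduce Corollary~\ref{dev} by combining the short exact sequence $0\to\mathcal{R}e_1\to D_{\rm rig}\to\mathcal{R}e_2\to 0$ with the explicit description of $w_D$ from Theorem~\ref{th2} (i.e.\ Propositions~\ref{inv} and~\ref{inv2}), using that $D_{\rm rig}\boxtimes\p1$ and the $\mathcal{R}\boxtimes_\delta\p1$ are built by the \emph{same} gluing recipe along $\zpet$. First I would identify the left-hand term: the submodule $\mathcal{R}e_1\boxtimes\p1=(D_{\rm rig}\boxtimes\p1)\cap(\mathcal{R}e_1\times\mathcal{R}e_1)$ was already shown (in the Corollary after Proposition~\ref{inv}) to be a closed ${\rm GL}_2(\qp)$-stable submodule. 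I claim the map $(f_1e_1,f_2e_1)\mapsto(f_1,f_2)$ identifies it with $(\mathcal{R}\boxtimes_{\delta_D\cdot\delta_1^{-2}}\p1)\otimes\delta_1$ as topological ${\rm GL}_2(\qp)$-modules: the gluing condition defining $\mathcal{R}e_1\boxtimes\p1$ is $w_D({\rm Res}_{\zpet}(f_1e_1))={\rm Res}_{\zpet}(f_2e_1)$, and by Proposition~\ref{inv} this reads $\delta_1(-1)\,w_{\delta_D\delta_1^{-2}}({\rm Res}_{\zpet}(f_1))\cdot e_1={\rm Res}_{\zpet}(f_2)\cdot e_1$, which after absorbing $\delta_1(-1)$ (equivalently, after the twist by $\delta_1$, which accounts for both the central-character shift and the sign) is exactly the defining relation of $\mathcal{R}\boxtimes_{\delta_D\delta_1^{-2}}\p1$. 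The compatibility with the ${\rm GL}_2(\qp)$-actions follows from the fact that these actions on $D_{\rm rig}\boxtimes\p1$ and on $\mathcal{R}\boxtimes_\delta\p1$ are given by the same Colmez-type formulas (cf.\ \ref{delta}), together with $\varphi(e_1)=\delta_1(p)e_1$, $\sigma_a(e_1)=\delta_1(a)e_1$; the twist by $\delta_1$ is precisely what turns the action induced on $\mathcal{R}e_1$ into the action on $\mathcal{R}$ with parameter $\delta_D\delta_1^{-2}$.

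Next I would treat the quotient $D_{\rm rig}\boxtimes\p1/\mathcal{R}e_1\boxtimes\p1$. As topological vector spaces this quotient is $(D_{\rm rig}/\mathcal{R}e_1)\times(D_{\rm rig}/\mathcal{R}e_1)=\mathcal{R}e_2\times\mathcal{R}e_2$, via $p_s$ on each coordinate; the point is to check that the image of the gluing condition is the one defining $\mathcal{R}\boxtimes_{\delta_D\delta_2^{-2}}\p1$ (twisted by $\delta_2$). Here one uses $\hat e_2$ with $p_s(\hat e_2)=e_2$: a general element of $D_{\rm rig}$ modulo $\mathcal{R}e_1$ is represented by $B\cdot\hat e_2$, and since $\mathcal{R}$ is generated over $\mathcal{R}(\Gamma)$ by $1+T$ and $D_{\rm rig}^{\psi=0}$ decomposes accordingly, one reduces (as in the proof of Proposition~\ref{inv2}, by semilinearity for $i_{\delta_D\delta_2^{-1}}$) to computing $p_s(w_D((1+T)\varphi(\hat e_2)))$, which Proposition~\ref{inv2} gives as $\delta_2(-1)(1+T)\varphi(e_2)$. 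Thus on the quotient the involution gluing the two copies of $\mathcal{R}e_2$ is $\delta_2(-1)w_{\delta_D\delta_2^{-2}}$, i.e.\ after the $\delta_2$-twist exactly $w_{\delta_D\delta_2^{-2}}$, so the quotient is $(\mathcal{R}\boxtimes_{\delta_D\delta_2^{-2}}\p1)\otimes\delta_2$; ${\rm GL}_2(\qp)$-equivariance is again inherited from the Colmez formulas together with $\varphi(\hat e_2)\equiv\delta_2(p)e_2$, $\sigma_a(\hat e_2)\equiv\delta_2(a)e_2$ modulo $\mathcal{R}e_1$.

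Finally I would assemble: the inclusion $\mathcal{R}e_1\boxtimes\p1\hookrightarrow D_{\rm rig}\boxtimes\p1$ and the projection onto the quotient give the desired three-term sequence, and it is short exact as a sequence of topological $L$-vector spaces because, coordinatewise, it is (two copies of) the split-in-topology short exact sequence $0\to\mathcal{R}e_1\to D_{\rm rig}\to\mathcal{R}e_2\to 0$; the maps are ${\rm GL}_2(\qp)$-equivariant by the identifications above, and they are strict (hence topological isomorphisms onto image / from coimage) since $\mathcal{R}e_1$ is closed in $D_{\rm rig}$ with topological complement. I expect the main obstacle to be bookkeeping rather than conceptual: namely, verifying cleanly that the ambient ${\rm GL}_2(\qp)$-action on $D_{\rm rig}\boxtimes\p1$, when restricted to the subspace $\mathcal{R}e_1\boxtimes\p1$ and pushed to the quotient, matches on the nose the action on $\mathcal{R}\boxtimes_\delta\p1$ after the stated twist — in particular getting the parameters $\delta_D\delta_1^{-2}$, $\delta_D\delta_2^{-2}$ and the twists $\delta_1$, $\delta_2$ exactly right, and checking that the gluing involutions transform as required, which is where Theorem~\ref{th2} does all the real work. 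The delicate lemma~II.3.13 of \cite{C5} is exactly the input that Theorem~\ref{th2} replaces, so once Propositions~\ref{inv} and~\ref{inv2} are in hand this corollary is essentially formal.
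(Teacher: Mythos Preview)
Your proposal is correct and follows essentially the same approach as the paper: define the inclusion and projection using the identifications $f\mapsto fe_1$ and $p_s$ coordinatewise, check that the gluing conditions match via Propositions~\ref{inv} and~\ref{inv2}, and read off the ${\rm GL}_2(\qp)$-equivariance from the explicit Colmez formulas together with the $(\varphi,\Gamma)$-equivariance of these maps. The only point where the paper is slightly more explicit is the handling of the signs $\delta_i(-1)$: rather than saying the twist ``absorbs'' them, the paper simply builds them into the maps by sending $(f_1,f_2)$ to $(f_1e_1,\delta_1(-1)f_2e_1)$ and $(A_1e_1+B_1\varphi(\hat e_2),A_2e_1+B_2\varphi(\hat e_2))$ to $(B_1,\delta_2(-1)B_2)$, which makes the verification of the gluing condition immediate.
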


  \begin{proof}  Commen\c{c}ons par d\'{e}finir les morphismes dans cette suite exacte. 
L'application $i$
de $\mathcal{R}\boxtimes_{\delta_D\cdot\delta_1^{-2}}\p1$ dans $D_{\rm rig}\boxtimes\p1$ envoie
$(f_1,f_2)$ sur $(f_1\cdot e_1, \delta_1(-1)f_2\cdot e_1)$. L'application ${\rm}$ de
$D_{\rm rig}\boxtimes\p1$ dans $\mathcal{R}\boxtimes_{\delta_D\cdot\delta_2^{-2}}\p1$
envoie $(A_1\cdot e_1+B_1\cdot\varphi(\hat{e}_2), A_2\cdot e_1+B_2\cdot \varphi(\hat{e}_2))$
sur $(B_1,\delta_2(-1)B_2)$, o\`{u} $A_i, B_i\in\mathcal{R}$ ($\hat{e}_2\in D_{\rm rig}$
est un rel\`{e}vement fix\'{e} de $e_2$). Le fait que ces applications $i$ et ${\rm pr}$ sont bien
d\'{e}finies et induisent une suite exacte d'espaces vectoriels topologiques est une cons\'{e}quence imm\'{e}diate
des propositions \ref{inv} et \ref{inv2}. La ${\rm GL}_2(\qp)$-\'{e}quivariance (\`{a} torsion par
$\delta_1$, resp. $\delta_2$ pr\`{e}s) suit des propositions \ref{inv} et \ref{inv2}, du fait que
$f\to f\cdot e_1$ et $p_s$ sont des morphismes de $(\varphi,\Gamma)$-modules et des formules explicites
donnant l'action de ${\rm GL}_2(\qp)$ sur les modules intervenant dans la suite exacte.

  \end{proof}

\begin{remark}
 Soit $\delta:\qpet\to L^*$ un caract\`{e}re continu. On a vu dans \ref{delta} que 
l'on a un isomorphisme de ${\rm GL}_2(\qp)$-modules topologiques
 $$\mathcal{R}^+\boxtimes_{\delta}\p1\simeq {\rm Ind}_{B}^{G}(1,\delta)^*\otimes\delta\simeq
({\rm Ind}_B^{G}(\delta^{-1}\otimes 1))^*.$$
On peut v\'{e}rifier (en utilisant des arguments identiques \`{a} ceux du chapitre II de \cite{C5}; voir aussi le paragraphe 4.1 de \cite{C7})
que l'application $\mathcal{R}\boxtimes_{\delta}\p1\to {\rm Ind}_{B}^{G}(\chi^{-1}\delta\otimes \chi^{-1})$
d\'{e}finie par $$z\to \phi_z, \quad \phi_z(g)={\rm res}_0\left({\rm Res}_{\zp}(wgz)\frac{dT}{1+T}\right)$$
est une surjection ${\rm GL}_2(\qp)$-\'{e}quivariante, de noyau $\mathcal{R}^+\boxtimes_{\delta}\p1$. On dispose
donc d'une suite exacte de ${\rm GL}_2(\qp)$-modules topologiques $$0\to ({\rm Ind}_B^{G}(\delta^{-1}\otimes 1))^*
\to \mathcal{R}\boxtimes_{\delta}\p1\to {\rm Ind}_B^{G}(\chi^{-1}\delta\otimes\chi^{-1})\to 0.$$
Combin\'{e} au corollaire \ref{dev} et \`{a} la suite exacte $$0\to (\Pi^{\rm an})^*\otimes\delta_D\to
D_{\rm rig}\boxtimes\p1\to \Pi^{\rm an}\to 0,$$ cela permet de montrer que $\Pi^{\rm an}$ est de longueur finie
et que $$(\Pi^{\rm an})^{\rm ss}=({\rm Ind}_{B}^{G}(\delta_1\otimes\chi^{-1}\delta_2))^{\rm ss}
\oplus ({\rm Ind}_B^{G}(\delta_2\otimes\chi^{-1}\delta_1))^{\rm ss}.$$
Il faut travailler un peu plus \cite{C7} pour d\'eterminer
les extensions entre les constituants de Jordan-H\"older.

\end{remark}

\footnotesize

\end{document}